\Crefname{subsection}{Section}{Sections}
\crefname{subsection}{section}{sections}
\Crefname{ALC@unique}{Line}{Lines}
\newcounter{myalg}
\newcommand{\R}{\mathbb{R}}
\newcommand{\V}[1]{\boldsymbol{#1}}
\newcommand{\M}[2][]{#1{\boldsymbol{#2}}}
\newcommand{\ME}[3][]{#1{\lowercase{#2}}_{#3}}
\newcommand{\bmat}[1]{\begin{bmatrix} #1 \end{bmatrix}}
\newcommand{\T}[2][]{#1{\boldsymbol{\mathcal{#2}}}}
\newcommand{\TE}[3][]{#1{\lowercase{#2}}_{#3}}
\newcommand{\prob}{\mathbb{P}}
\newcommand{\expect}{\mathbb{E}}
\newcommand{\mc}[1]{\mathcal{#1}}
\renewcommand{\t}{^\top}
\newcommand{\hada}{\ast}
\newcommand{\kron}{\otimes}
\newcommand{\kr}{\odot}
\newcommand{\krt}{\kr\t}
\newcommand{\krp}{\text{KRP}}   
\newcommand{\changes}[1]{{#1}}
\title{Improved Analysis of Khatri-Rao Random Projections and Applications \thanks{Submitted to the editors DATE. \funding{AKS was funded, in parts, by the National Science Foundation through the award DMS-1821149 and the Department of Energy through the awards DE-SC0023188 and DE-SC0025262. GB and BDV were funded by the National Science Foundation through the award CCF-1942892 and the Department of Energy under awards DE-SC-0023296 and DE-SC-0025394.}}}
\author{Arvind K.\ Saibaba\thanks{Department of Mathematics, North Carolina State University, \email{asaibab@ncsu.edu}}, \url{https://asaibab.math.ncsu.edu/}  \and Bhisham Dev Verma\thanks{Department of Computer Science, Wake Forest University, \email{vermabd@wfu.edu}} \and Grey Ballard\thanks{Department of Computer Science, Wake Forest University, \email{ballard@wfu.edu}}, \url{https://users.wfu.edu/ballard/}}
\begin{document}
\maketitle

\begin{abstract}
    Randomization has emerged as a powerful set of tools for large-scale matrix and tensor decompositions. 
    Randomized algorithms involve computing sketches with random matrices. 
    A prevalent approach is to take the random matrix as a standard Gaussian random matrix, for which the theory is well developed. 
    However, this approach has the drawback that the cost of generating and multiplying by the random matrix can be prohibitively expensive. 
    Khatri-Rao random projections (\krp{}s), obtained by sketching with Khatri-Rao products of random matrices, offer a viable alternative and are much cheaper to generate. 
    {However, the theoretical guarantees of using \krp{}s are much more pessimistic compared to their accuracy observed in practice.
    We attempt to close this gap by obtaining improved analysis of the use of \krp{}s in matrix and tensor low-rank decompositions.}
    We propose and analyze a new algorithm for low-rank approximations of block-structured matrices (e.g., block Hankel) using \krp{}s. 
    We also show how to accelerate tensor computations in the Tucker format using \krp{}s and give theoretical guarantees of the resulting low-rank approximations.  
    Numerical experiments on synthetic and real-world tensors show the computational benefits of the proposed methods. 
\end{abstract}
\begin{keywords}
    Structured random matrices, low-rank tensor decompositions, low-rank matrix decompositions, randomized algorithms. 
\end{keywords}

\begin{MSCcodes}
65F55, 68W20
\end{MSCcodes}

\section{Introduction}
Applications of low-rank approximations of matrices and tensors abound in science and engineering. For matrices, the singular value decomposition (SVD) gives an optimal low-rank representation, but it is expensive to compute for large-scale matrices. 
While considering tensor decompositions, there are many formats to consider, such as canonical polyadic (CP), Tucker, Tensor Train, among others~\cite{AGHKT14,ballard2025tensor,Comon14,Hackbusch14,kolda2009tensor,Morup11,SL+17}. In this paper, we focus on the Tucker decomposition, which is known to have good compression ability. 

In recent years, randomized algorithms for computing these low-rank matrix and tensor decompositions have become increasingly powerful~\cite{halko2011finding,martinsson2020randomized,murray2023randomized}. At the heart of randomized algorithms is the use of a random matrix to obtain a ``sketch'' of the matrix or tensor. A major computational bottleneck in randomized algorithms is the computational cost of forming the sketch.  There are two factors one must consider. The first is the cost of generating the random matrix, and second, the cost of applying the matrix or tensor to the random matrix to obtain the sketch. The prototypical random matrix is the standard Gaussian random matrix (a matrix with independent and identically distributed normal random variables with zero mean and unit variance). Gaussian random matrices have good theoretical properties that enable their wide-spread use in randomized algorithms. However, Gaussian random matrices have two major downsides. First, they can be expensive to generate and store, especially when the size of the matrix/tensor is large. Second, one cannot exploit the structure of the matrix or tensor to compute the sketch efficiently. 

In recent years, there is an ongoing search for finding random matrices for use in randomized algorithms that have comparable theoretical properties as Gaussian random matrices but can be used to efficiently compute the sketch~\cite{bujanovic2024subspace,kressner2017recompression,minster2024parallel,saibaba2025randomized,SGTU21}. 
In this paper, we consider Khatri-Rao random projections (\krp{}s), where the sketching matrix is a Khatri-Rao product of $d$ random matrices. 
This paper proposes and analyzes randomized algorithms for computing low-rank approximations of matrices and tensors that make use of \krp{}s. 
The main advantages of the proposed methods are (1) the need to generate fewer (pseudo) random numbers and lower storage costs to store the random matrices used in the compression, and (2) the ability to exploit the structure of the matrix/tensor, leading to a reduction in computational costs.

\paragraph{Contributions} A brief history of \krp{}s is given in~\cite[Section 9.4]{martinsson2020randomized}. See also~\cite[Chapter 7]{murray2023randomized} for additional references. However, to the best of our knowledge, the theoretical properties of \krp{}s in the context of low-rank matrix and tensor decompositions are not fully understood. This article aims to develop a theoretical foundation for the use of \krp{}s in low-rank matrix and tensor decompositions. A second goal of this article is to develop new low-rank approximation algorithms that use \krp{}s. Specifically, the contributions of the paper are:
\begin{enumerate}
    \item We provide theoretical analysis for the use of \krp{}s in the randomized range finding algorithm (\Cref{sec:rrf}), building on the analysis of~\cite{haselby2023fast}. As an intermediate step, we derive an oblivious subspace embedding result for \krp{}s. 
    
    \item In \Cref{sec:blockstruct}, we show how \krp{}s can be used to compute low-rank approximations of block-structured matrices (e.g., block Toeplitz, block Hankel, etc). We illustrate this approach on a test problem in system identification that requires compressing a large block Hankel matrix. An extension to multilevel block-structured matrices is discussed, and an analysis of the low-rank approximation is given. 
    \item We show \krp{}s can be used to accelerate the computational cost of randomized tensor decompositions in the HOSVD and  STHOSVD style and analyze the accuracy of the algorithms (\Cref{sec:tucker}). The memoized variant of HOSVD with \krp{}s  have previously not appeared in the literature.  
    \item We consider a recent application of tensor compression to optimal sensor placement in flow reconstructions. We show how \krp{}s can be used to accelerate computational costs on a challenging 3D unsteady flow problem (\Cref{sec:flow}). 
    \item We give a theoretical justification for the approach in~\cite{kressner2017recompression} that uses \krp{}s to recompress Hadamard products of tensors in the Tucker format (\Cref{sec:tucker}).
\end{enumerate}

The source code for the reproducibility of our experimental results is available at: \url{https://github.com/bhisham123/KRP-Random-Projections}. 

\paragraph{Related work}
A review of the various options for random matrices inside sketching are given in~\cite{martinsson2020randomized,murray2023randomized}. Theoretical guarantees for some other distributions may be found in~\cite{saibaba2025randomized}. 

We now turn our attention to \krp{}s. First, we address the special case of $d=2$. 
Sun et al.~\cite{SGTU21} establish that random matrices with \krp{} structure satisfy Johnson-Lindenstrauss type results for $d=2$. 
Similar results can be found in~\cite{chen2021tensor}, with applications to constrained least squares problems. The paper~\cite{bujanovic2024subspace} further used the Johnson-Lindenstrauss type result to establish a subspace embedding for random matrices with \krp{} structure, with applications to eigensolvers using contour integration. Bujanovic and Kressner~\cite{bujanovic2021norm} use random vectors with \krp{} structure to estimate norms and trace.

Rakhshan and Rabusseau~\cite{RR20} and Haselby et al.~\cite{haselby2023fast} further extend the Johnson-Lindenstrauss type results to random matrices with \krp{} structure for dimensions $d > 2$. 
These results were then extended to show subspace embeddings as in~\cite{bujanovic2024subspace} for $d=2$. Another more recent paper~\cite{camano2025faster} focuses on the analysis of \krp{}s using subspace injections. { 
A more detailed comparison of our results with these existing approaches is given in \cref{ssec:rrfproof}. } 
More extensive discussion on random matrices with \krp{} structure can be found in~\cite[Section 9.4]{martinsson2020randomized} and~\cite[Chapter 7]{murray2023randomized}.

\krp{}s have previously been used for tensor compression~\cite{CW19,che2025efficient,kressner2017recompression,sun2020low}, with theoretical guarantees for \krp{}s given in~\cite{CW19,che2025efficient,kressner2017recompression}. A comparison of our results with~\cite{CW19,che2025efficient} is given in \Cref{sec:rrf}. In~\cite{kressner2017recompression}, the authors use KRPs to recompress a Hadamard product of tensors in the Tucker format.
A detailed review of randomized techniques in tensor decompositions is given in~\cite[Section V]{randtucker_survey}. 
Some other recent methods include~\cite{bucci2024multilinear,hashemi2023rtsms}.

A different style of tensor random products is used in~\cite{bamberger2021hanson,jin2021faster,malik2020guarantees} with the form $\M\Phi\M{D}$, where $\M{D}$ is a diagonal matrix with \krp{} structure on the diagonal and $\M\Phi$ is a subsampled Fourier/Hadamard transform. Other papers using \krp{} structure include~\cite{iwen2021lower,LK22}. 
In~\cite{minster2021efficient,saibaba2021efficient}, random matrices using Kronecker products were used instead of \krp{}s.

\paragraph{Hardware and software description}
The experimental results reported in~\Cref{sec:blockstruct,sec:flow} are collected on a Mac Pro with an M1 chip and 16 GB of RAM, running MATLAB version R2023B. 
The results in~\Cref{sec:tucker} are collected on an Intel Xeon Gold 6226R system with 256 GB of RAM, running Ubuntu 20.04.6 LTS and MATLAB version R2023B.

\section{Background}
\subsection{Notation}
We denote matrices using uppercase bold italic letters, vectors using lowercase bold italic letters, and scalars using plain lowercase italic letters. For example,  $\M{X} \in \mathbb{R}^{m \times n}$ represents a matrix of size $m \times n$, $\V{x} \in \R^n$ denotes an $n$-dimensional vector, and $x$ represents a scalar. 

We use $\M{X} = \M{U} \M{\Sigma} \M{V}\t$ to denote the SVD of a matrix $\M{X}$, where $\M{U}$ and $\M{V}$ are orthogonal matrices containing the left and right singular vectors, respectively, and $\M{\Sigma}$ is a diagonal matrix whose diagonals contain the singular values of $\M{X}$. We use $\M{X}^\dagger$ to denote the Moore-Penrose pseudo-inverse of a matrix $\M{X}$.

We use $\kron$ to denote the Kronecker product and $\kr$ for the Khatri-Rao product. For matrices $\M{X} \in \R^{m \times n}$ and $\M{Y} \in \R^{p \times q}$, the \textbf{Kronecker product} $\M{X} \kron \M{Y} \in \R^{mp \times nq}$ is defined as the block matrix 
\[ \M{X} \kron \M{Y} = \bmat{x_{11} \M{Y} & \dots & x_{1n}\M{Y} \\ \vdots & \ddots & \vdots \\ x_{m1}\M{Y} & \dots & x_{mn}\M{Y}  }. \] 
The \textbf{Khatri-Rao product}, or the column-wise Kronecker product,  for matrices $\M{X} \in \R^{m \times r}$ and $\M{Y} \in \R^{n \times r}$, is defined as 
\[ \M{X} \odot \M{Y} = \bmat{ \V{x}_1 \kron \V{y}_1 & \dots &\V{x}_r \kron \V{y}_r  } \in \R^{mn \times r} , \] 
where the $i$-th column of the result is given by $\V{x}_i \kron \V{y}_i$, with $\V{x}_i$ and $\V{y}_i$ being the $i$-th columns of $\M{X}$ and $\M{Y}$, respectively.

We denote tensors by uppercase bold calligraphic letters, e.g., $\T{X} \in \mathbb{R}^{n_1 \times \cdots \times n_d}$ denotes an order-$d$ tensor, with entries $x_{i_1,\dots,i_d}$ for $1 \le i_j \le n_j$ and $1 \le j \le d$. A tensor can be unfolded along any of its modes. The mode-$i$ unfolding of a tensor $\T{X}$, denoted by $\M{X}_{(i)}$, is a matrix of size $n_i \times \prod_{j \neq i} n_j$, where each column corresponds to a mode-$i$ fiber of the tensor. 

Let $1 \le i \le d$. For a matrix $\M{A} \in \R^{r_i \times n_i}$, the tensor-times-matrix (TTM) product $\T{X} \times_i \M{A}$ contracts the tensor along mode-$i$, resulting in a new tensor $\T{Y} \in \R^{n_1 \times \cdots \times n_{i-1} \times r_i \times n_{i+1} \times \cdots \times n_d}$. This operation can also be expressed using the mode-$i$ unfolding as $\M{Y}_{(i)} = \M{A} \M{X}_{(i)}$. This operation extends naturally to $d$ matrices $\M{A}_i \in \mathbb{R}^{r_i \times n_i}$ applied along each mode and results in a multilinear transformation given by $\T{Y} = \T{X} \times_1 \M{A}_1 \times_2 \cdots \times_d \M{A}_d$. In terms of mode-$i$ unfolding, we can write it as $\M{Y}_{(i)} = \M{A}_i \M{X}_{(i)} (\M{A}_d \kron \cdots \kron \M{A}_{i+1} \kron \M{A}_{i-1} \kron \cdots \kron \M{A}_1)$. 

We denote the norm of a tensor $\T{X}$ by $\|\T{X}\|_F = \left(\sum_{i_1,\dots,i_d}|x_{i_1,\dots,i_d}|^2 \right)^{1/2}$, which is the square root of the sum of squares of all its entries.

\subsection{Randomized low-rank approximations} 
In recent years, randomized algorithms have gained significant popularity for computing low-rank approximations of matrices due to their reduced computational cost. {Some notable examples are the randomized range finder~\cite{halko2011finding}, randomized SVD~\cite{halko2011finding}, and single-\changes{pass}  randomized algorithms~\cite{nakatsukasa2020fast,tropp2017practical}.}
 Given a matrix $\M{X} \in \R^{m \times n}$, a target rank $r$ and an oversampling parameter $\rho$, we construct a sketch matrix by multiplying $\M{X}$ with a random matrix $\M{\Omega} \in \R^{n \times \ell }$ with $\ell = r + \rho$, that is, $\M{Y} = \M{X} \M{\Omega}$.  Typically, the entries of $\M{\Omega}$ are independent and drawn from a standard Gaussian distribution (with zero mean and unit variance), although other distributions can also be used. In particular, we explore a random matrix based on tensor random projections.

After computing the sketch $\M{Y} = \M{X\Omega}$, we compute a thin QR decomposition of $\M{Y} = \M{QR}$. If the matrix $\M{X}$ is approximately of rank $r$, or if its singular values decay rapidly after the $r$-th one, then the range of $\M{Q}$ serves as a good approximation of the range of $\M{X}$. This procedure is called the Randomized Range Finder, and its pseudocode is given in \cref{alg:rrf}.

\begin{algorithm}[!ht]
\caption{Randomized Range Finder~\cite{halko2011finding}}
\begin{algorithmic}[1]
\REQUIRE Matrix $\M{X} \in \R^{m\times n}$, target rank $r$ and oversampling parameter $\rho$.
\STATE Draw matrix  $\M{\Omega} \in \R^{n \times \ell}$ where $\ell = r+\rho$
\STATE Compute sketch matrix $\M{Y} = \M{X} \M{\Omega}$
\STATE Compute thin QR factorization $\M{Y} = \M{QR}$
\RETURN Matrix $\M{Q}$
\end{algorithmic}
\label{alg:rrf}
\end{algorithm}
The projection $\M{QQ}\t \M{X}$ provides an effective low-rank approximation of $\M{X}$; however, it yields an approximation of rank $\ell$, not the target rank $r$. To obtain a rank-$r$ approximation, a further truncation step is required. One common approach is to compute a singular value decomposition (SVD) of $\M{Q}\t\M{X}$, and retain only the leading $r$ components. This procedure is known as the Randomized SVD algorithm, and its pseudocode is presented in \cref{alg:rsvd}.
 
 \begin{algorithm}[!ht]
\caption{Randomized SVD~\cite{halko2011finding}}
\begin{algorithmic}[1]
\REQUIRE Matrix $\M{X} \in \R^{m\times n}$, target rank $r$ and oversampling parameter $\rho$.
\STATE Invoke \cref{alg:rrf} with inputs $\M{X} \in \R^{m\times n}$, target rank $r$ and oversampling parameter $\rho$ to obtain matrix $\M{Q}$ with orthonormal columns
\STATE Compute thin SVD $\M{Q}^T\M{X} = \hat{\M{U}} \M{\Sigma} \M{V}^T$ \label{line:secondpass}
\STATE Truncate $\M{U} = \M{Q} \hat{\M{U}}(:,1:r)$, $\M{\Sigma} = \M{\Sigma}(1:r,1:r)$ and $\M{V} = \M{V}(:,1:r)$
\RETURN Matrices $\M{U}$, $\M{\Sigma}$, $\M{V}$
\end{algorithmic}
\label{alg:rsvd}
\end{algorithm}

Several studies have proposed single-\changes{pass} randomized algorithms~\cite{nakatsukasa2020fast,tropp2017practical} for computing low-rank approximations of rectangular matrices. In this work, we focus on the single-\changes{pass} algorithm introduced by Tropp et al.~\cite {tropp2017practical}. Their method uses a two-sided random projection approach, where different random matrices are applied on the left and right to form sketch matrices. These sketches are then used to construct a low-rank approximation of the input matrix.
We discuss the process as follows:  Given a matrix  $\M{X} \in \mathbb{R}^{m \times n}$ and a target rank  $r$, compute the sketch matrices
\[
\M{Y} = \M{X} \M{\Omega} \quad \text{and} \quad \M{Z} = \M{\Psi}^\top \M{X},
\]
where $ \mathbf{\Omega} \in \mathbb{R}^{n \times \ell_r}$ and $\mathbf{\Psi} \in \mathbb{R}^{m \times \ell_l}$ are random Gaussian matrices with $ r < \ell_r < \ell_l< \min\{m,n\}$.  
Then, compute the QR decomposition of $\M{Y} = \M{Q} \M{R}$. The low rank approximation of $\M{X}$ is then computed as follows:
$$
    \M{X} \approx \M{Q} (\M{\Psi}\t \M{Q})^{\dagger} \M{Z} = \M{Q} \M{W}
$$
where $\M{W} = (\M{\Psi}\t \M{Q})^{\dagger} \M{Z}$.
A detailed pseudocode is provided in~\cref{alg:gen_nystrom}. This procedure yields an approximation of rank greater than $r$. We can obtain the exact rank-$r$ approximation through additional post-processing of $\M{W}$.

 \begin{algorithm}[!ht]
\caption{Single \changes{pass} randomized algorithm~\cite{tropp2017practical}}
\begin{algorithmic}[1]
\REQUIRE Matrix $\M{X} \in \R^{m\times n}$, sketching parameters $\ell_r$ and $\ell_l$
\STATE Draw random matrices  $\M{\Omega} \in \R^{n \times \ell_r}$ and $\M{\Psi} \in \R^{m \times \ell_l}$
\STATE Compute the sketches $\M{Y} = \M{X} \M{\Omega}$ and $\M{Z} = \M{\Psi}^\top \M{X}$
\STATE Compute QR factorization $\M{Y} = \M{Q} \M{R}$
\STATE Solve a least squares problem to obtain $\M{W} = (\M{\Psi}^T \M{Q})^{\dagger} \M{Z}$
\RETURN Matrices $\M{Q}$, $\M{W}$
\end{algorithmic}
\label{alg:gen_nystrom}
\end{algorithm}

\subsection{Tucker decomposition}
The Tucker decomposition is a higher-order generalization of matrix SVD to tensors. Let $\T{X} \in \R^{n_1 \times \cdots  \times n_d}$ be an order-$d$ tensor.  The Tucker decomposition with multilinear rank $(r_1, \ldots, r_d)$, where each $r_i = \operatorname{rank}(\M{X}_{(i)})$, factorizes $\T{X}$ into a core tensor $\T{G} \in \R^{r_1 \times \cdots \times r_d}$ and a set of factor matrices $\{\M{Q}_{i} \in \mathbb{R}^{n_i \times r_i}\}_{i=1}^d$, such that $\T{X} \approx \T{G} \times_1 \M{Q}_{1} \times_2 \M{Q}_{2} \cdots \times_d \mathbf{Q}_{d}$. If the chosen multilinear ranks $(r_1, \ldots, r_d)$ satisfy $r_i < \operatorname{rank}(\M{X}_{(i)})$ for some or all $i$, then the Tucker decomposition provides a low-rank approximation of $\T{X}$ in Tucker form. Higher order SVD (HOSVD) \cite{de2000multilinear} and sequentially truncated HOSVD (ST-HOSVD) \cite{vann2012new} are widely used direct algorithms for computing Tucker decompositions.  HOSVD performs singular value decomposition on each mode-$i$ unfolding $\M{X}_{(i)}$ of the tensor independently and selects the leading $r_i$ left singular vectors to form the orthonormal factor matrices $\M{Q}_i$. The core tensor is then computed by projecting the original tensor onto the subspaces defined by these matrices, i.e., $\T{G} = \T{X} \times_1 \M{Q}_1^T \times_2 \cdots \times_d \M{Q}_d^T$, see Algorithm~\ref{alg:hosvd}.

\begin{algorithm}[!ht]
    \caption{Higher Order SVD (HOSVD)~\cite{de2000multilinear} }
    \begin{algorithmic}[1]
    \REQUIRE Tensor $\T{X}$ and target rank $\V{r} = (r_1,r_2,\dots,r_d)$
    \FOR{$i=1,\dots,d$}
       \STATE Compute $\M{Q}_i$, $\M{Q}_i$ =  leading $r_i$ left singular vectors of $\M{X}_{(i)}$ 
    \ENDFOR
    \STATE Compute core $\T{G} = \T{X} \times_1 \M{Q}_1^T \times_2 \cdots \times_d \M{Q}_d^T$ 
    \RETURN Core tensor $\T{G}$, factor matrices $\{\M{Q}_i\}_{i=1}^d$    
    \end{algorithmic}
    \label{alg:hosvd}
\end{algorithm}

Unlike HOSVD, sequentially truncated HOSVD (ST-HOSVD) performs the truncation sequentially across modes by projecting the tensor onto lower-dimensional subspaces one mode at a time. This is done by computing a partially truncated core tensor via a tensor-times-matrix (TTM) product at each step, i.e., $\T{G} =\T{G} \times_i \M{Q}_i^T$, see Algorithm~\ref{alg:sthosvd}.
\begin{algorithm}[!ht]
    \caption{Sequentially Truncated HOSVD (ST-HOSVD)~\cite{vann2012new} }
    \begin{algorithmic}[1]
    \REQUIRE Tensor $\T{X}$ and target rank $\V{r} = (r_1,r_2,\dots,r_d)$
    \STATE Set $\T{G} = \T{X}$
    \FOR{$i=1,\dots,d$}
        \STATE Compute $\M{Q}_i$, $\M{Q}_i$ =  leading $r_i$ left singular vectors of $\M{X}_{(i)}$ 
        \STATE Update core $\T{G} \leftarrow \T{G} \times_i \M{Q}_i^\top$
    \ENDFOR
    \RETURN  Core tensor $\T{G}$, factor matrices $\{\M{Q}_i\}_{i=1}^d$    
    \end{algorithmic}
    \label{alg:sthosvd}
\end{algorithm}
 
\subsection{Probability background}
We review the background on probability and random matrices, that we will use frequently in our analysis, often without reference. 
\paragraph{Conditioning} Let $A,B$ be two events. The law of total probability says 
\[ \prob\{B\} = \prob\{B|A\}\prob\{A\} + \prob\{B|A^c\} \prob\{A^c \} \le \prob\{B|A\} + \prob\{A^c \}, \]
where $A^c$ denotes the complementary event. The lower bound $\prob\{B\} \ge \prob\{B|A\}\prob\{A\}$ will also be useful. 

\paragraph{$L^p$ norms} Given a random variable $X$, the  $L^p$ norms are defined as $\|X\|_{L^p} = (\expect |X|^p)^{1/p}$ for $p \geq 1$. We will need the centering identity $\|X - \expect X \|_{L^p} \le 2 \|X\|_{L^p}$ for any $p \geq 1$; see, for e.g.,~\changes{\cite[Lemma 49]{haselby2023fast}}. We also have $\|X^2\|_{L^p} = \|X\|_{L^{2p}}^2$.

We present a lemma that is essentially due to~\cite[Corollary 33]{haselby2023fast} but applies to non-symmetric random variables. We use the symmetrization inequalities to achieve this result. 
\begin{lemma}\label{lem:concineq}
 Let $Z_1,\dots, Z_\ell$ be a sequence of independent and identical mean zero random variables with $\|Z_i\|_{L^p} \le (Cp)^d$ for $1\le i \le \ell$, then for $p \ge 2$
 \[ \| \frac1\ell \sum_{i=1}^\ell Z_i \|_{L^p} \le (C')^d \max\left\{ \sqrt{\frac{p}{\ell}}, \frac{p^d}{\ell} \right\}, \]
 where $C'$ is a constant independent of $p,\ell$  but that depends on $C$.
\end{lemma}
\begin{proof}
Let $\varepsilon_1, \ldots, \varepsilon_{\ell}$ be independent Rademacher random variables that are also independent of $Z_1, \ldots, Z_{\ell}$. Then each $\varepsilon_i Z_i$ is symmetric and  and satisfies $\| \varepsilon_i Z_i\|_{L^p} = \|Z_i\|_{L^p} \leq (Cp)^d$. On applying symmetrization \cite[Lemma 6.3]{ledoux1991probability} together with  \cite[Corollary 33]{haselby2023fast}, we get
\[
\|\frac{1}{\ell}\sum_{i=1}^{\ell} Z_i\|_{L^p} \leq 2  \|\frac{1}{\ell}\sum_{i=1}^{\ell} \varepsilon Z_i\|_{L^p}  \leq  (C')^d \max\left\{ \sqrt{\frac{p}{\ell}}, \frac{p^d}{\ell} \right\}
\]
for $C' = 2eC$. 
\end{proof}

\paragraph{Sub-Gaussian random variables} We say that $X$ is an sub-Gaussian random variable~\cite[Definition 2.5.6]{vershynin2018high} if there exists a constant $K_1>0$ such that
\[ \prob\{ |X| \geq t\} \leq 2 \exp(-t^2/K_1^2),  \qquad \forall t > 0,\]
The sub-Gaussian norm of $X$, denoted $\|X \|_{\Psi_2}$, is defined as   
\[ \|X \|_{\Psi_2}  = \inf\left\{ t > 0 \, | \, \expect \exp(|X|^2/t^2) \le 2 \right\}.  \]

\subsection{\changes{Khatri-Rao product matrices}} As mentioned earlier, let $\kr$ denote the Khatri-Rao product (columnwise Kronecker product).
\begin{definition}[\krp]\label{def:subgauss}
\changes{We call $\M{\Omega}$ a KRP of order $d$ if } $\M{\Omega} = \M{\Omega}^{(1)} \kr \cdots \kr \M{\Omega}^{(d)}$, where $\M{\Omega}^{(j)} \in \R^{n_j\times \ell}$   for $1\leq j \leq d$ are independent random matrices whose entries are mean zero, unit variance,  subgaussian random variables, with subgaussian norm  bounded by $K \geq 1$.
\end{definition}
Throughout this paper, we will also use the notation 
\begin{equation}\label{eqn:N} N = \prod_{j=1}^d n_j.\end{equation}

This next result shows that, by construction, the columns of the random matrix are independent and isotropic. 
\begin{lemma}[Isotropic columns]
Let $\M{\Omega} = \V{\Omega}^{(1)} \kr \cdots \kr \V{\Omega}^{(d)}\in \R^{N \times \ell} $ be a \krp~as in Definition~\ref{def:subgauss}. The random matrix has independent and isotropic columns, i.e., $\expect[\V{\Omega}(:,k)] = \V{0}$ and $\expect[\V{\Omega}(:,k)\V{\Omega}(:,k)\t]  = \M{I}_{N}$ for $1 \leq k \leq \ell$.
\end{lemma}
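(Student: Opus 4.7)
The plan is to exploit the Kronecker product identity $(\V{a}\kron\V{b})(\V{c}\kron\V{d})\t = (\V{a}\V{c}\t)\kron(\V{b}\V{d}\t)$ together with the fact that the expectation distributes across tensor products of independent random vectors. Writing the $k$-th column of $\M{\Omega}$ as
\[ \V{\omega}_k \;=\; \V{\Omega}^{(1)}(:,k)\kron \V{\Omega}^{(2)}(:,k)\kron\cdots\kron\V{\Omega}^{(d)}(:,k), \]
I would first establish independence of the columns and then compute the mean and covariance of a single column.

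For independence, note that for $k\neq k'$ the two columns $\V{\omega}_k$ and $\V{\omega}_{k'}$ are built from disjoint collections of columns $\{\V{\Omega}^{(j)}(:,k)\}_{j=1}^d$ and $\{\V{\Omega}^{(j)}(:,k')\}_{j=1}^d$. Within each factor $\M{\Omega}^{(j)}$ the entries are independent (by Definition~\ref{def:subgauss}), so different columns of the same factor are independent, and the factors themselves are independent across $j$. Hence the two disjoint collections are jointly independent, and so are the measurable functions $\V{\omega}_k$ and $\V{\omega}_{k'}$ built from them.

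For the mean, independence of the factors gives
\[ \expect[\V{\omega}_k] \;=\; \bigotimes_{j=1}^d \expect[\V{\Omega}^{(j)}(:,k)] \;=\; \V{0}, \]
since each factor has mean-zero entries. For the covariance, I would apply the identity above with $\V{a}=\V{c}=\V{\Omega}^{(1)}(:,k)$, etc., and then use independence of the factors to move the expectation inside the Kronecker product:
\[ \expect[\V{\omega}_k \V{\omega}_k\t] \;=\; \bigotimes_{j=1}^d \expect\!\left[\V{\Omega}^{(j)}(:,k)\,\V{\Omega}^{(j)}(:,k)\t\right] \;=\; \bigotimes_{j=1}^d \M{I}_{n_j} \;=\; \M{I}_N, \]
where the middle equality uses that the entries of $\V{\Omega}^{(j)}(:,k)$ are i.i.d.\ with zero mean and unit variance, and the last equality uses $\M{I}_{n_1}\kron\cdots\kron\M{I}_{n_d}=\M{I}_N$ together with the definition of $N$ in~\eqref{eqn:N}.

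There is no real obstacle here; the only subtlety is being careful with the Kronecker-transpose identity and with invoking independence of the factor matrices (as opposed to merely independence within a factor) to pull the expectation through the Kronecker product. Once those two facts are in place, the proof is essentially a one-line calculation for each of the two claims.
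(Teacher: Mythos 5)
Your proof is correct and follows essentially the same route as the paper's: factor the expectation over the $d$ independent factor vectors and use that each has uncorrelated, zero-mean, unit-variance entries; the paper carries out this computation entrywise (showing the $(i,j)$ entry of the covariance equals $\prod_k \delta_{i_k j_k}$), while you package the same calculation via the Kronecker mixed-product identity, and you additionally spell out the column-independence claim that the paper dispatches with ``it is sufficient to consider a single vector.''
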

\begin{proof}
    \changes{See~\cite[Lemma 4.1]{CW19}. }
\end{proof}

\changes{
Our analysis relies on the following lemma, taken from \cite{ahle2020oblivious}, which bounds the $L^p$ norms of quantities involving columns of KRP matrices.
\begin{lemma}{\cite[Lemma 4.9] {ahle2020oblivious}} \label{lem:ahle_etal}
    For $1 \le i\le d$, let $\M{\omega}_{i} \in \R^{n_i}$ be independent random vectors satisfying the Khintchine inequality $\|\langle \M{\omega}_i, \M{x}\rangle\|_{L^{p}} \leq C_p \|\M{x}\|_2$ for every $p \ge 1$ and any vector $\M{x} \in \R^{n_i}$. Then for any vector $\M{a} \in \R^{N}$ and $p\ge1$, $\|\langle \M{\omega}_1 \kron \cdots \kron \M{\omega}_d, \M{a}\rangle\|_{L^{p}} \leq C_{p}^d \|\V{a}\|_2$. 
\end{lemma}
}

\section{Randomized range finding}\label{sec:rrf}
We state the main result of the analysis of the randomized range finder in \Cref{ssec:rrfmain} and give the proof in \Cref{ssec:rrfproof}.
\subsection{Main result}\label{ssec:rrfmain}Let $\M{M}\in \R^{M \times N}$ be a matrix, for which we want to compute a low-rank approximation with target rank $r\le \min\{M,N\}$. 
We consider the partitioned SVD of $\M{M}$ as 
\[ \M{M} = \bmat{\M{U}_r & \M{U}_\perp} \bmat{ \M\Sigma_r & \\ & \M\Sigma_\perp} \bmat{\M{V}_r\t \\ \M{V}_\perp\t}.   \]
where $\M{U}_r\in \R^{M\times r}$, $\M\Sigma_r \in \R^{r\times r}$, and $\M{V}_r\in\R^{N \times r}$.

 We derive a result on the randomized range finder (\cref{alg:rrf}) \changes{with KRP}. 
 
\begin{theorem}\label{thm:rrf}
Let $\M{Q}$ be the result of the randomized range finder of $\M{M} \in \R^{ M\times N}$ with input \krp{} ~$\M{\Omega} = \M{\Omega}^{(1)} \kr \dots \kr \M{\Omega}^{(d)} \in \R^{N \times \ell}$. Let parameters $0 < \delta \le 2e^{-2}$, and let $\ell$  satisfy $\ell \ge C^d \ln^d(2\cdot 9^r/\delta)$. With probability at least $1-\delta$, 
\[ \begin{aligned}\| \M{M} - \M{QQ}\t\M{M} \|_F^2 \leq & \>   (1 + 2 (1+\Gamma)) \|\M\Sigma_\perp\|_F^2,
\end{aligned}\]
where \[\Gamma \equiv   (C')^d\max\left\{\frac{\ln^{1/2}(2/\delta)}{\ell^{1/2}},  \frac{\ln^d(2/\delta)}{\ell} \right\}.\]
Here $C,C'$ are constants that only depend on $K$. 
\end{theorem}

 \changes{A recent paper~\cite{camano2025faster} also derives results for the randomized range finder using \krp{} matrices. Their proof relies on showing that \krp{} matrices satisfy a weaker notion of subspace embedding called subspace injections, which is sufficient for range finder and other low-rank approximation results. The number of samples $\ell$ required by their analysis~\cite[Theorem 5.1 and Theorem 1.3]{camano2025faster} is $\mc{O}(C^d r)$, where $C$ is an absolute constant. A possible benefit of our approach is that their result assumes a fixed failure probability $\delta$, whereas our result tracks the failure probability in the number of samples and the error estimate for the range finder. In fact for $d=1$, it gives the same dependence on failure probability as a matrix with independent subgaussian entries~\cite[Theorem 3.3]{saibaba2025randomized}. Our analysis does not assume $n_1 = \dots = n_d$ as~\cite{camano2025faster} apppears to do, so is slightly more general, though this doesn't seem to be a major limitation in their analysis. On the other hand, their analysis applies to far more distributions than is covered by our analysis.  } The analysis of the range finder using \krp{}s has also been done in the context of Tucker low-rank approximations~\cite{CW19,che2025efficient}. Therefore, we include a comparison with that existing work in \Cref{sec:tucker}.

Finally, it is worth remembering that the constants show an exponential dependence on $d$, suggesting that the analysis may only be meaningful for small or moderate dimensions (i.e., $d = \changes{2}$ to $5$). Extensive numerical evidence for $d=2$ in~\cite{bujanovic2024subspace} suggest that \krp{}~matrices perform only slightly worse than Gaussian random matrices. Numerical experiments in this paper also suggest that the theoretical upper bounds are pessimistic in practice. An  exponential dependence on $d$ appears to be necessary based on fundamental results by~\cite{meyer2026hutchinson,meyer2025understanding} on Kronecker matrix-vector product complexity.

\subsection{Proof of \texorpdfstring{\cref{thm:rrf}}{Theorem 3.1}}\label{ssec:rrfproof}
This subsection is dedicated to the proof of \cref{thm:rrf}. Before we launch into the proof, we need several intermediate results. It should be noted that the constants in the analysis may change from step to step.

\begin{lemma}\label{lem:kronrank1new} Let $\V{\omega} \in \R^{N\times 1}$ be a \krp{} (see Definition~\ref{def:subgauss}) and let $\M{V}\in \R^{N \times k}$ be an arbitrary {non-zero} matrix with columns $\V{v}_j$ for $1\le j \le k$. Then, 
\[ \|\|\M{V}\t \V{\omega}\|_2^2 - \|\M{V}\|_F^2\|_{L^p} \le \|\M{V}\|_F^2 (Cp)^d,  \]
where $C$ is a constant that depends only on $K$. 
\end{lemma}

\begin{proof}
The proof has two steps.
\paragraph{1. Defining the random variable $Z$} Without any loss of generality, assume that  $\|\M{V}\|_F = 1$. Now, consider the random variable $$Z =  \|\M{V}\t\V{\omega}\|_2^2 - \|\M{V}\|_F^2 = \sum_{j=1}^k ((\V{v}_j\t\V{\omega})^2-\|\V{v}_j\|_2^2) ,$$ which has zero mean. In the next step, we will analyze the properties of $Z$. 

\paragraph{2. Bounding $\|Z\|_{L^p}$} For every $1 \le j \le d$, by the assumptions on $\V{\omega}_j$, by~\cite[Lemma 3.4.2]{vershynin2018high}, \changes{for any  vector $\V{y} \in \R^{n_j}$}, the random variable $\V{\omega}_j\t\V{y}$ is subgaussian with norm  $\|\V{\omega}_j\t\V{y}\|_{\Psi_2}\le \changes{\bar{C}_S} K \|\V{y}\|_2$, where $\changes{\bar{C}_S}$ an absolute constant. Then, by~\changes{\cite[Eq. (2.15) in Section 2.5.2]{vershynin2018high}}, for $p\geq 1$, we have
\[ \| \V{\omega}_j\t\V{y}\|_{L^p} \le c \,  \sqrt{p}\,  \| \V{\omega}_j\t\V{y}\|_{\Psi_2} = C_SK \sqrt{p} \|\V{y}\|_2 
\] 
where $C_S =c \cdot \bar{C}_S > 0$ is some absolute constant.
 By \changes{Lemma \ref{lem:ahle_etal}},  for any  vector $\V{z} \in \R^N$, we have 
\begin{equation} \label{eqn:krd}\| \V{\omega}\t\V{z}\|_{L^p} \le (C_S K \sqrt{p})^d \|\V{z}\|_2, \qquad \forall p \ge 1.   \end{equation}
So, by the triangle inequality and the centering property of $L^p$ norms, 
\[ \begin{aligned}\|Z\|_{L^p}  \le  & \> 2\sum_{j=1}^k \|(\V{v}_j\t\V{\omega})^2\|_{L^p} \\
\le &  2\sum_{j=1}^k \|\V{v}_j\t\V{\omega}\|_{L^{2p}}^2 \le 2(\changes{C_S} K\sqrt{2p})^{2d} \left(\sum_{j=1}^k \|\V{v}_j\|_2^2 \right).   \end{aligned}\]
The last inequality follows from~\eqref{eqn:krd} and note that $\|\M{V}\|_F^2 = \sum_{j=1}^k \|\V{v}_j\|_2^2 =1$. Therefore, $\|Z\|_{L^p} \le 2(C_SK\sqrt{2})^{2d}  p^d \leq C^d p^d$ for $C = 4 \, C_S^2 \,K^2$.  This completes the proof.
\end{proof}

A matrix $\M\Omega$ is said to be an oblivious subspace embedding for the subspace $\mathcal{R}(\M{W})$ with distortion parameter $\epsilon \in (0,1)$ if 
\[ (1-\epsilon)\|\V{x}\|_2  \le \|\M\Omega \V{x}\|_2 \le (1+\epsilon)\|\V{x}\|_2 \qquad \forall \V{x} \in \mathcal{R}(\M{W}). \]
The next result establishes an oblivious subspace embedding for \krp{} matrices. The authors in~\cite{ahle2020oblivious} and~\cite{haselby2023fast} derived bounds for Johnson-Lindenstrauss type bounds involving \krp{} matrices. From here, one can convert it into a subspace embedding using a standard epsilon-net argument, which we present here. We can also infer this result from combining~\cite[Theorem 28 and Lemma 10]{haselby2023fast}, but we give a bit more detail in our exposition. We denote the range of a matrix $\M{A}$ by $\mathcal{R}(\M{A})$.

\begin{theorem}\label{thm:subspace2}
    Let  $\M{W} \in \R^{N \times r}$ have orthonormal columns and $\M{X} \in \R^{N \times \ell}$ be \krp{} (Definition~\ref{def:subgauss})  with $$\ell \ge C^d \ln^d(9^r/\delta) , $$
    where $C$ is a constant that only depends on the parameter $K$. Then, for any parameter $0 < \delta < 1$, with probability at least $1-\delta$, the following bounds hold independently 
    \begin{align} \label{eqn:pseudo} \frac{2}{3\ell} \le & \>  \| (\M{W}\t\M{X})^\dagger\|_2^2 \le \frac{2}{\ell} \\ \label{eqn:krpose} 
   \frac12\|\V{x}\|_2^2 \le & \> \| \frac1\ell \M{X}\t \V{x}\|_2^2 \le \frac32 \|\V{x}\|_2^2 \qquad \forall \V{x} \in \mathcal{R}(\M{W}) .    \end{align}

\end{theorem}
\begin{proof}
This result is based on a standard $\epsilon$-net argument similar to~\cite[Theorem 4.6.1]{vershynin2018high}.  Let $\mc{N}$ be a $1/4$-net on the unit sphere $S^{r-1}$ with cardinality bounded by $9^r$. We want to analyze the singular values of $\M{Z} = \M{X}\t\M{W} \in \R^{\ell \times r}$ which has independent rows. By~\cite[Exercise 4.4.3]{vershynin2018high}
    \begin{equation}\label{eqn:epsilonnet2}\| \frac{1}{\ell}\M{Z}\t\M{Z}-\M{I}\|_2 \le 2 \max_{\V{c}\in \mc{N}} | \frac{1}{\ell} \| \M{Z}\V{c}\|_2^2 - 1|. \end{equation}
    Define the random variables $X_i = 
    \left((\M{X}(:,i)\t\M{Wc})^2-1\right)$ for $1\le i \le \ell$ so that {$\frac1\ell\|\M{Z}\V{c}\|_2^2 -1 = \frac{1}{\ell}\sum_{i=1}^\ell X_i$}. Each random variable $X_i$ is independent, has zero mean and satisfies $\|X_i\|_{L^p} \le (Cp)^d$ for some $C$ that only depends on $K$. We have used Lemma~\ref{lem:kronrank1new} with $\M{V} = \M{W}\V{c}$ and $\|\M{W}\V{c}\|_2 = 1$.
 
Thus, by Lemma~\ref{lem:concineq}, 
    \[ \| \frac1\ell \sum_{i=1}^\ell X_i \|_{L^p} \le C^d \max\left\{ \sqrt{\frac{p}{\ell}}, \frac{p^d}{\ell} \right\}.\]
    We can use Markov inequality to get the tail bound 
    \[ \prob\{ |\frac1\ell \sum_{i=1}^\ell X_i| \ge t \} = \prob\{ |\frac1\ell \sum_{i=1}^\ell X_i|^p \ge t^p \} \le  \frac{C^{dp}}{t^p} \max\left\{ \left(\frac{p}{\ell}\right)^{p/2}, \frac{p^{pd}}{\ell^p} \right\} .\] 
    Taking a union bound over the net $\mc{N}$ and using~\eqref{eqn:epsilonnet2}
    \[ \prob\{  \| \frac{1}{\ell}\M{Z}\t\M{Z}-\M{I}\|_2 \ge 2t\} \le 9^r\frac{C^{dp}}{t^p} \max\left\{ \left(\frac{p}{\ell}\right)^{p/2}, \frac{p^{pd}}{\ell^p} \right\}. \] 
    We take $t = 1/4$, $p = \ln(9^r/\delta)$ and note that $p \ge 2$ for any $r \ge 1$ and $\delta < 1$. Set the failure probability to $\delta$ and solve for $\ell$ to get 
   \[ \ell \ge \max\left\{ {16 e^2 \cdot C^{2d}}\ln(9^r/\delta), {4e \cdot C^d}\ln^d(9^r/\delta) \right\}. \]
    Adjusting the constants, we get the bound for $\ell$ in the statement of the theorem and 
    $\prob\{ \| \frac{1}{\ell}\M{Z}\t\M{Z}-\M{I}\|_2 \ge \frac12\} \le \delta$. Denote the smallest singular value of $\M{Z}$ as $\sigma_r$, so with probability at least $1 -\delta$,  $|\sigma_r^2/\ell - 1| \le 1/2$ and $ 2/(3\ell) \le \sigma_r^{-2} \le 2/\ell$. Finally, $\|\M{Z}^\dagger\|_2^2 = 1/\sigma_r^2$ which completes the proof of the first set of inequalities~\eqref{eqn:pseudo}. 

    For the second set of inequalities~\eqref{eqn:krpose}, let $\sigma_1$ be the largest singular value of $\M{Z}$. So, $\| \frac{1}{\ell}\M{Z}\t\M{Z}-\M{I}\|_2 $ implies $ \frac12 \le \sigma_r^2/\ell \le \sigma_1^2/\ell \le \frac32$ holds with probability at least $1-\delta$. The second set of inequalities~\eqref{eqn:krpose} follow readily from the variational characterization of extremal singular values.  
    \end{proof}

{
\paragraph{Discussion} 
Theorem~\ref{thm:subspace2} shows that the matrix $\M\Omega = \M{X}\t/\sqrt{\ell}$ is an oblivious subspace embedding for the subspace $\mathcal{R}(\M{W})$ with distortion parameter $\epsilon= 1 - \sqrt{1/2}$, with probability at least $1-\delta$ if $\ell$ is sufficiently large. We compare our results for subspace embedding with other known results. For the comparisons with other subspace embedding results, we consider a fixed $\epsilon$. Our analysis predicts the number of samples to be $\mc{O}(C^d(r^d + \ln^d(1/\delta)))$.

The analysis for Gaussian random matrices predicts the number of samples to be $\mc{O}(r + \ln(1/\delta) )$, which is substantially smaller than for \krp{}~matrices. However, it is worth recalling that Gaussian random matrices require much more effort to generate. More generally,~\cite{saibaba2021efficient} showed that  for the class of random matrices $\M{\Omega}$ with subgaussian rows (which includes the independent entries as a special case), the number of samples for a subspace embedding is $\mc{O}(r + \ln(1/\delta) )$.  For subsampled randomized Hadamard transform, as shown in~\cite{tropp2011improved,martinsson2020randomized}, the number of samples is $\mc{O}(r \ln r)$. Corresponding results for other distributions are given in~\cite{saibaba2025randomized,martinsson2020randomized}.

We now turn our attention to comparison with other \krp{}~results. The analysis for \krp{}~matrices was done for the case $d=2$ in~\cite{bujanovic2024subspace}. The number of samples for a subspace embedding is
\[ \ell \sim r^{3/2}  + r\ln(1/\delta) + r^{1/2}\ln^2(1/\delta). \] 
For $d=2$ and some range of $r$ this result is more favorable than our result,  which predicts $\ell = \mc{O}(r^2 + \ln^2(1/\delta))$.  It should be noted, however, that our results apply to \krp{}s with any subgaussian entries, not just the Gaussian distribution considered in~\cite{bujanovic2024subspace}. }

We now continue with our proof of Theorem~\ref{thm:rrf}.
\begin{theorem}\label{thm:zbound}
    Let  $\M{Z} \in \R^{M \times N}$ be nonzero and $\M{X} \in \R^{N \times \ell}$ be \krp{} (Definition~\ref{def:subgauss}). Then, for {$0 < \delta \le e^{-2}$}, with probability at least $1-\delta$,
    \[  \| \M{ZX}\|_F^2  \le  \ell \|\M{Z}\|_F^2(1 + \Gamma) \quad \text{ where } \quad \Gamma \equiv  C^d\max\left\{\frac{\ln^{1/2}(1/\delta)}{\ell^{1/2}},  \frac{\ln^d(1/\delta)}{\ell} \right\} ,  \]
    and $C$ is a constant that only depends on the sub-Gaussian norm $K$.  
\end{theorem}
\begin{proof}
    Without loss of generality, let $\|\M{Z}\|_F= 1$. Define the independent, mean zero, random variables 
    \[ Z_i = \| \M{ZX}(:,i)\|_2^2 - \|\M{Z}\|_F^2 \qquad \text{ for} \quad 1 \le i \le \ell,   \]
    so that  $\| \M{ZX}\|_F^2 - \ell \|\M{Z}\|_F^2 = \sum_{i=1}^\ell Z_i$. Applying Lemma~\ref{lem:kronrank1new}, we have $\|Z_i\|_{L^p} \le (Cp)^d$ for $1 \le i \le \ell$. By Lemma~\ref{lem:concineq}, 
    \[ \| \frac1\ell \sum_{i=1}^\ell Z_i\|_{L^p} \le C^d \max\left\{ \sqrt{\frac{p}{\ell}}, \frac{p^d}{\ell} \right\}.   \]
     We can use Markov inequality to get the tail bound 
    \[ \prob\{ |\frac1\ell \sum_{i=1}^\ell Z_i| \ge t \} = \prob\{ |\frac1\ell \sum_{i=1}^\ell Z_i|^p \ge t^p \} \le  \frac{C^{dp}}{t^p} \max\left\{ \left(\frac{p}{\ell}\right)^{p/2}, \frac{p^{pd}}{\ell^p} \right\} .\] 
    Set the tail bound to $\delta$ and $p = \ln(1/\delta)$. By the assumption on $\delta$, $p \ge 2$. Solve for $t$ to get
    {\[ t = e \, C^d\max\left\{\left(\frac{\ln(1/\delta)}{\ell}\right)^{1/2},  \frac{\ln^d(1/\delta)}{\ell} \right\}. \]}
    Adjusting the constants, we get 
    $\prob\{ | \frac1\ell \|\M{ZX}\|_F^2 - 1| \ge \Gamma \} \le \delta$. 
    This completes the proof.  
\end{proof}

At last, we are ready to give a proof of Theorem~\ref{thm:rrf}. The proof of this theorem follows quickly from all the intermediate results.
\begin{proof}[Proof of Theorem~\ref{thm:rrf}]
    
    From Theorem~\ref{thm:subspace2} with $\M{W} = \M{V}_r$, we have $\|(\M{V}_r\t\M{\Omega})^\dagger\|_2^2\le 2/\ell$ with probability at least $1-\delta/2$. Thus, for this event $\text{rank}(\M{V}_r\t\M\Omega) = r$. Applying the deterministic bound~\cite[Theorem 9.1]{halko2011finding} on this event 
\begin{equation}\label{eqn:hmt2} \| \M{M}-\M{QQ}\t\M{M}\|_F^2 \leq \|\M\Sigma_\perp\|_F^2 +  \|\M\Sigma_\perp (\M{V}_\perp\t \M{\Omega})(\M{V}_r\t\M{\Omega})^\dagger\|_F^2. \end{equation}
Applying submultiplicativity 
\begin{equation}\label{eqn:rrfinter} \|\M\Sigma_\perp (\M{V}_\perp\t \M{\Omega})(\M{V}_r\t\M{\Omega})^\dagger\|_F^2 \le \|\M\Sigma_\perp (\M{V}_\perp\t \M{\Omega})\|_F^2 \|(\M{V}_{\changes{r}}\t\M{\Omega})^\dagger\|_2^2 . \end{equation}
By Theorem~\ref{thm:zbound} with 
$\M{Z} = \M\Sigma_\perp\M{V}_\perp\t$, we have $\|\M{ZX}\|_F^2 \le \ell \|\M{\Sigma}_\perp\|_F^2 (1+\Gamma)$, where $\Gamma$ is given in the statement of the theorem, with probability at least $1-\delta/2$. By a union bound, with probability at least $1-\delta$,  $\|\M\Sigma_\perp (\M{V}_\perp\t \M{\Omega})(\M{V}_r\t\M{\Omega})^\dagger\|_F^2 \le 2\|\M{\Sigma}_\perp\|_F^2 (1+\Gamma)$. Plug this bound into~\eqref{eqn:hmt2} to complete the proof.
\end{proof}

\section{Application: Compressing Block-structured matrices}\label{sec:blockstruct}
In this section, we consider a block matrix $\M{M} \in \R^{(mp)\times (nq)}$ containing $p \times q$ blocks of size $m\times n$ each. In many applications, the matrices are structured at the block level (e.g., block-Toeplitz, block-Hankel matrices) and this structure can be exploited to develop efficient algorithms. Following the approach in~\cite{kilmer2021structured}, we can express the block-structured matrix in the form 
\[ \M{M} = \sum_{j=1}^t \M{E}_j \otimes \M{M}_j, \]
where $t$ is the number of distinct block matrices $\{\M{M}_j\}_{j=1}^t$ and $\M{E}_j \in \R^{p\times q}$ for $1\le j \le t$ are  matrices with ones and zeros representing the locations of the nonzero blocks.\footnote{This is slightly different from~\cite{kilmer2021structured} in which the blocks $\M{E}_j$ are scaled by the number of times that the subblocks appear.} 
In applications such as the one described in \Cref{sec:SI}, we seek to compute a low-rank approximation of $\M{M}$. 
Since the matrices can be quite large, the computation of the SVD is infeasible, so randomized algorithms are cheaper and more feasible. 
The standard randomized SVD is computationally advantageous over the SVD; however, it has two drawbacks (1) it does not naturally exploit the structure of the matrix, and (2) generating and storing the random matrices can be challenging when the matrix size is large. 

\subsection{Efficient compression algorithm} 
The first approach we could consider is applying randomized SVD (\cref{alg:rsvd}) with $\M{\Omega}$ as a \krp{}. 
The sketch $\M{M\Omega}$ can be computed efficiently, but the step in \cref{line:secondpass} requires $\M{Q}\t\M{M}$. 
This is not as efficient, as $\M{Q}$ no longer has a Kronecker product structure.  
Therefore, to address this, we use a single-\changes{pass} approach for computing a low-rank approximation, as in~\cite{tropp2017practical}.

Suppose the target rank is $r$. We draw a random matrix $\M{\Omega} = \M{\Omega}^{(1)} \kr \M{\Omega}^{(2)} \in \R^{(nq )\times \ell_r}$, where the factors have independent Gaussian entries with zero mean and variance $1$. 
The parameter $\ell_r$ represents the sketch size, which is larger than the target rank. Next, we compute the sketch $\M{Y} = \M{M}\M{\Omega}$; this can be efficiently done using the properties of Kronecker products, as
\begin{equation}\label{eqn:blockstruct}
    \M{Y} = \M{M}\M{\Omega} =  \left(\sum_{j=1}^t \M{E}_j \otimes \M{M}_j \right)(\M{\Omega}^{(1)} \kr \M{\Omega}^{(2)}) = \sum_{j=1}^t (\M{E}_j\M{\Omega}^{(1)})\kr (\M{M}_j\M{\Omega}^{(2)}).
\end{equation}  
We then compute a thin-QR factorization $\M{Y} = \M{QR}$. 
The intuition is that $\M{M} \approx \M{QQ}\t\M{M}$ as captured in the statement of \cref{thm:rrf}. Another sketch $\M{Z} = \M{M}\t\M{\Psi}$ is computed by a similar formula as in~\eqref{eqn:blockstruct}, where $\M{\Psi} = \M{\Psi}^{(1)}\kr \M{\Psi}^{(2)} \in \R^{(mp)\times \ell_{l}}$. We then obtain the low-rank approximation
\begin{equation}
    \label{eqn:singleview}
    \M{M} \approx \M{Q} (\M{\Psi}\t\M{Q})^\dagger \M{Z}\t.
\end{equation}
The low-rank approximation~\eqref{eqn:singleview} can be further post-processed by converting to SVD format and truncating to retain the $r$ largest singular values. The details are given in Algorithm~\ref{alg:blockstructured}. Guidance for $\ell_r$ and $\ell_l$ can be found in Section~\ref{ssec:blockstructanalysis}. 

The computational cost of computing the sketches is $$\mc{O}\left( rt(mp + nq) + \sum_{j=1}^t r(\text{nnz}(\M{E}_j) + \text{nnz}(\M{M}_j) ) \right) \> \text{flops},$$
assuming $\ell_1, \ell_2 = \mc{O}(r)$. 
There is an additional cost of $\mc{O}( (mp + nq) r^2 )$ flops for orthogonalizing and post-processing the low-rank approximation. 
This approach is advantageous over the same algorithm but with Gaussian random matrices because the random matrix takes advantage of the structure of $\M{M}$ while computing matrix-vector products. 
More specifically, if $\M{\Omega}$ and $\M{\Psi}$ are standard Gaussian random matrices, the cost of forming the sketches is $\mc{O}( \sum_{j=1}^t r\cdot( (m+n)\text{nnz}(\M{E}_j) + (q+p)\text{nnz}(\M{M}_j) )$ flops. The cost of orthogonalizing and post-processing is the same as before. Furthermore, the cost of generating and storing the random matrix is reduced considerably. We only need to generate $(n+q)\ell_r + (m+p)\ell_l$ random numbers, rather than $nq\ell_r + mp\ell_l$ random numbers for standard Gaussian random matrices.

\begin{algorithm}[!ht]
\begin{algorithmic}[1]
\REQUIRE Matrix $\M{M} \in \R^{(mp)\times (nq)}$, target rank $r$, parameters $\ell_l,\ell_r$.
\STATE Draw matrix  $\M{\Omega} = \M{\Omega}^{(1)} \kr \M{\Omega}^{(2)} \in \R^{(nq )\times \ell_r}$, $\M{\Psi} = \M{\Psi}^{(1)}\kr \M{\Psi}^{(2)} \in \R^{(mp) \times \ell_l}$
\STATE Compute sketches $\M{Y} = \sum_{j=1}^t (\M{E}_j\M{\Omega}^{(1)})\kr (\M{M}_j\M{\Omega}^{(2)})$ and $\M{Z} = \sum_{j=1}^t (\M{E}_j\t\M{\Psi}^{(1)})\kr (\M{M}_j\t\M{\Psi}^{(2)})$
\STATE Compute thin QR factorization $\M{Y} = \M{QR}$
\STATE Compute $\M{W} = (\M{\Psi}\t \M{Q})^\dagger \M{Z}\t$
\STATE Compute the thin SVD $\M{U}_W\M\Sigma\M{V}\t =\M{W}$
\STATE Compute $\M{U} = \M{QU}_W$
\RETURN Matrices $\M{U}(:,1:r), \M\Sigma(1:r,1:r), \M{V}(:,1:r)$ that can be used to form a low-rank approximation to $\M{M}$
\end{algorithmic}
\caption{Single \changes{pass} Randomized SVD for block-structured matrices}
\label{alg:blockstructured}
\end{algorithm}

If $\M{M}$ is symmetric positive semidefinite, we can instead use the Nystr\"om approximation
\[ \M{M} \approx \M{M}_{\rm nys} :=  \M{M\Omega} (\M{\Omega}\t\M{M\Omega})^\dagger (\M{M\Omega})\t,\]
where $\M{\Omega} = \M{\Omega}^{(1)}\kr \M{\Omega}^{(2)}$ is the Khatri-Rao product of the sketches.

\subsection{Numerical illustration: System identification} 
\label{sec:SI}
In this application, we consider the problem of system identification using the  eigensystem realization algorithm (ERA). The goal is to obtain the matrices $(\M{A},\M{B},\M{C},\M{D})$ that govern the system dynamics from the data, which involves the so-called Markov parameters $\M{H}_k = \M{CA}^{k-1}\M{B} \in \R^{m\times n}$ for $1 \leq k \leq 2s-1$ and $\M{H}_0 = \M{D}$. Here, $s$ determines the number of time measurements. Since the first Markov parameter $\M{H}_0 = \M{D}$, estimating $\M{D}$ is trivial. To recover the other three matrices, the steps involved are to form a large block-Hankel matrix 
\[ \M{\mathcal{H}} = \bmat{\M{H}_1 & \M{H}_2  &  \dots & \M{H}_{s-1}\\ \M{H}_2 & \M{H}_3 & \dots & \M{H}_{s} \\ \dots & \dots & \dots & \dots \\\M{H}_{s-1} & \dots& \dots  & \M{H}_{2s-1}  } \in \R^{(ms)\times (ns)}. \]
{The expensive step is to then compute a rank-$r$ approximation to the matrix. The resulting singular triplets are then used to solve a least-squares problem to identify the system matrices. The details are given in~\cite{minster2021efficient}.} In this paper, we focus on accelerating the compression step. Previous work~\cite{minster2021efficient,kilmer2021structured} has developed randomized algorithms for computing the dominant singular triplets, which exploited the block-Hankel structure in other ways. 

In contrast,  we follow a different approach. We first note that the block-structured matrix has the decomposition
\[ \M{\mathcal{H}} = \sum_{k=1}^{2s-1} \M{E}_k\kron \M{H}_k,  \]
where $\M{E}_k\in \R^{s\times s}$ is a Hankel matrix with $1$'s corresponding to the entries satisfying $i+j-1 = k$, and $0$'s everywhere else. We exploit this decomposition along with the \krp{} structure. 

We consider the same test problem as in~\cite[Section 5.2]{minster2021efficient} with $m=155$, $n=50$, and $s=200$; in this instance, the problem size is $31,000\times 10,000$. 
We apply single-\changes{pass} randomized SVD for block-structured matrices with dense Gaussian random matrices and KRP structured random matrices (\cref{alg:blockstructured}) to compute the rank-$r$ SVD with $r=155$, $\ell_r = r+\rho$, and $\ell_l = \lceil 1.5\ell_r\rceil$ and then apply the remaining steps of the ERA algorithm. We refer to the single-\changes{pass} randomized SVD method using dense Gaussian matrices as R-Gauss and the one using KRP-structured matrices as R-KRP. For comparison, we use the Randomized ERA algorithm from~\cite[Section 3.1]{minster2021efficient}, which uses a Gaussian random matrix to compute the Randomized SVD (which we denote RandERA). Note that this algorithm also exploits the block-Hankel structure. In our experiments, we use the oversampling parameter $\rho=20$. To evaluate these methods, we performed run-time and accuracy experiments for different rank values $r$. We assess accuracy in terms of the Hausdorff distance~\cite[Equation (5.1)]{minster2021efficient} between the eigenvalue set of the ground-truth matrix $\M{A}$ and the eigenvalue set of its reconstruction via R-KRP, R-Gauss, and RandERA. A smaller Hausdorff distance indicates a more accurate recovery.

In the left panel of \cref{fig:krpera}, we summarize the average running time results over 10 runs, and in the right panel, the average Hausdorff distance results. From the running time plots, we observe that the running time of R-KRP is the lowest among all methods across all rank values $r$. The accuracy results in the right panel show that the average Hausdorff distance error of R-KRP and R-Gauss are similar, and are either similar or better than that of RandERA. 

A key advantage of R-KRP is that it reduces both computational complexity and the overhead of random number generation. In these experiments, R-KRP requires only approximately $7.8\%$ of the random numbers used by RandERA and about $1.3\%$ of those used by R-Gauss, while maintaining comparable or superior performance compared to RandERA. We note around 4.5$\times$ speedup over R-Gauss and a 3$\times$ speedup over RandERA.

Finally, we note that RandERA is specific to block-Hankel systems whereas R-KRP is applicable to block-structured matrices, more broadly. 

\begin{figure}[!ht]
    \centering    \includegraphics[width=0.92\textwidth]{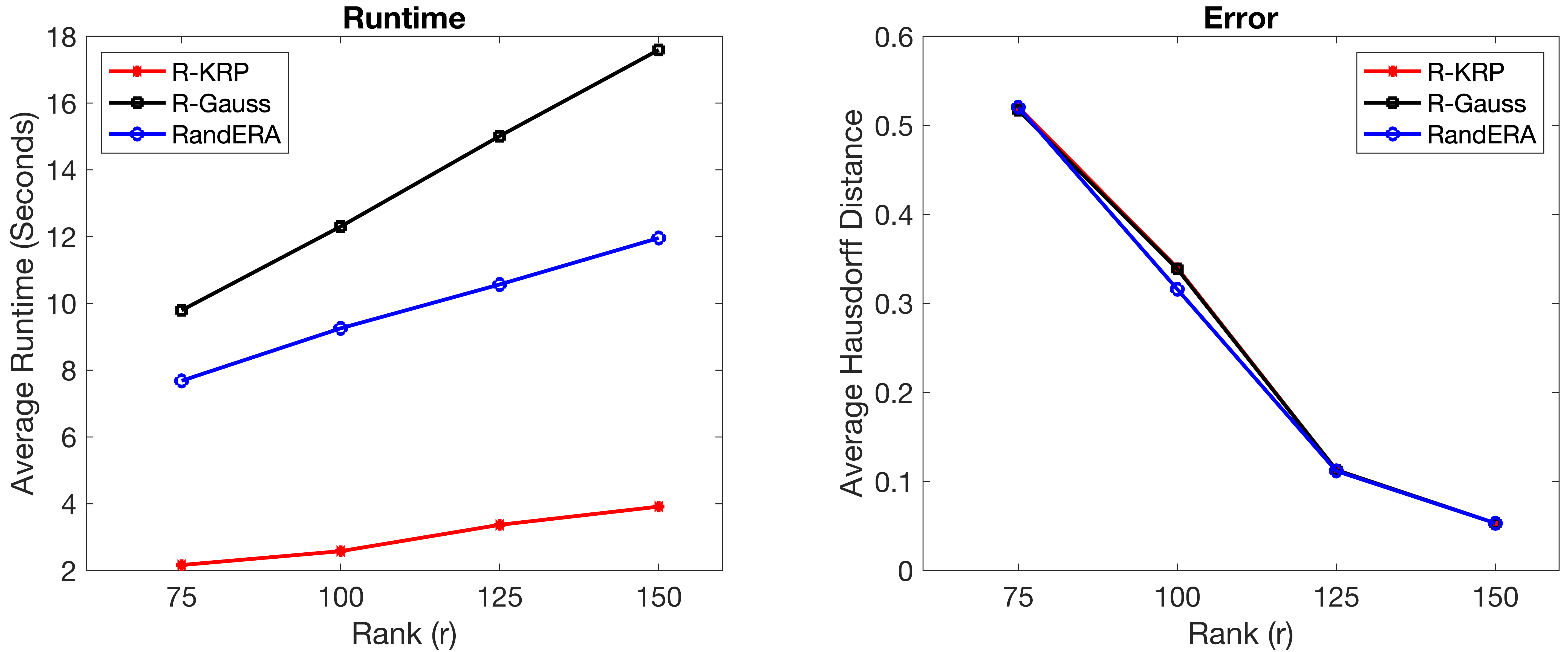}
    \caption{Left: Average running time over 10 runs for various methods used to recover the system matrices. Right: Average recovery error, measured by the Hausdorff distance between the eigenvalue set of the ground-truth matrix $\M{A}$ and the eigenvalue set of its reconstruction using R-KRP, R-Dense, and RandERA methods.}
    \label{fig:krpera}
\end{figure}

\subsection{Extensions to higher-order structure} This approach can be trivially extended to compute low-rank approximations of matrices that can be expressed as sums of Kronecker products. That is, assuming, $\M{M} = \sum_{j=1}^t\M{B}_j \otimes \M{C}_j$. In many applications, the matrix may have multilevel structure (e.g., block Toeplitz with Toeplitz blocks, or triply block Toeplitz). More precisely, let $\M{A} \in \R^{M\times N}$ with $L$ levels of block structure, where $M = m \left(\prod_{j=1}^L\ell_j\right)$ and $N = n \left(\prod_{k=1}^L q_j\right)$. Following~\cite[Section 7]{kilmer2021structured}, this can be expressed as 
\[ \M{M} = \sum_{i_1=1}^{p_1} \cdots \sum_{i_L=1}^{p_L} \M{E}_{i_1}^{(1)} \otimes \cdots \otimes
\M{E}_{i_L}^{(L)} \otimes \M{M}^{(i_1,\dots,i_L)} \]
where $\M{M}^{(i_1,\dots,i_L)}$ are the $m \times n$ non-redundant blocks at level $L$. The matrices $\M{E}_{k}^{(j)} \in \R^{\ell_j\times q_j}$, with entries in $\{0,1\}$, represent the different mapping matrices at level $j$, where $1 \leq j \leq L$. At each level $j$, there are $p_j$ such matrices where $1 \leq p_j \leq \ell_jq_j$. It is relatively straightforward to extend the approach in Algorithm~\ref{alg:blockstructured} to the multilevel structured case. The details are omitted. We provide an analysis for single-\changes{pass} approximation in the general higher order block structured case. 

\subsection{Analysis}\label{ssec:blockstructanalysis}
In this section, we analyze the single-\changes{pass} algorithm for a matrix $\M{M} \in \R^{m^d \times n^d}$ with target rank $r$. 
We draw two independent matrices $\M{\Omega} \in \R^{n^d \times \ell_r}$ and $\M{\Psi} \in \R^{m^d \times \ell_l}$. 
Next, we compute the sketches $\M{Y} = \M{M \Omega} $ and compute its thin QR factorization $\M{Y} = \M{QR}$. The resulting low-rank approximation to $\M{M}$ is then $\M{Q} (\M{\Psi}\t\M{Q})^\dagger \M{\Psi}\t\M{M}$. The following result provides analysis for the low-rank approximation. 

\begin{theorem}
    Let $\M{M} \in \R^{m^d \times n^d}$ and let $r$ be the target rank.  Let $\M{\Omega} \in \R^{n^d \times \ell_r} $ and $\M{\Psi} \in \R^{m^d \times \ell_l}$ be two independent \krp{} matrices (see Definition~\ref{def:subgauss}). Fix $0 < \delta < 4e^{-2}$, take the sketch sizes $\ell_r \ge  \> C_r^d \ln(4\cdot 9^r/\delta)$ and $\ell_l \ge \> C_l^d \ln(4 \cdot 9^{\ell_r}/\delta)$. 
    Then, with probability at least $1-\delta$
    \[ \|\M{M} - \M{Q}(\M{\Psi}\t\M{Q})^\dagger\M{\Psi}\t\M{M}\|_F^2 \le (1+2(1+2\Gamma_r) ( 1 + 2(1+\Gamma_l) ) \|\M\Sigma_\perp\|_2^2  ,  \]
    where 
    \[ \begin{aligned} \Gamma_r  \equiv &   (C_r')^d\max\left\{\frac{\ln^{1/2}(4/\delta)}{\ell_r^{1/2}},  \frac{\ln^d(4/\delta)}{\ell_r} \right\}  \\ \Gamma_l \equiv    &   (C_l')^d\max\left\{\frac{\ln^{1/2}(4/\delta)}{\ell_l^{1/2}},  \frac{\ln^d(4/\delta)}{\ell_l} \right\}  .\end{aligned}\]
\end{theorem}
Here $C_r, C_l, C_r'$ and $C_l'$ are constants that only depend on $K$. 
\begin{proof} 
Introduce the orthogonal projector $\M{P} = \M{I}- \M{QQ}\t$.  
By the proof of~\cite[Theorem 4.3]{tropp2017practical},  \begin{equation}\label{eqn:twosided} \begin{aligned} \|\M{M}-  \M{Q}(\M{\Psi}\t\M{Q})^\dagger\M{\Psi}\t\M{M}\|_F^2 = & \>  \|\M{PM}\|_F^2  + \| (\M{\Psi}\t\M{Q})^\dagger \M\Psi\t \M{PM}\|_F^2 \\
\le &\> \|\M{PM}\|_F^2  + \|(\M{\Psi}\t\M{Q})^\dagger\|_2^2 \|  \M{\Psi}\t\M{PM}\|_F^2,   \end{aligned}\end{equation}
where in the last step, we have used submultiplicativity.

Let $\mc{E}$ denote the event 
\[ \|\M{M}-\M{QQ}\t\M{M}\|_F^2 \le (1 + 2(1+\Gamma_r))\|\M\Sigma_\perp\|_F^2.\] 
The assumptions on $\ell_r$ ensure that, by \cref{thm:rrf}, $\prob\{\mc{E}^c\}\le \delta/2$. Let us condition on the event $\mc{E}$. Noting that $\M\Omega$ and $\M\Psi$ are independent, as in the proof of Theorem~\ref{thm:rrf}, 
\[ \|(\M{\Psi}\t\M{Q})^\dagger\|_2^2 \|  \M{\Psi}\t\M{PM}\|_F^2 \le 2 (1+\Gamma_l)\|\M{PM}\|_F^2 \le 2(1+\Gamma_l)(1+2(1+\Gamma_r))\|\M\Sigma_\perp\|_F^2,\] 
holds with probability at least $1-\delta/2$. We can remove the conditioning to obtain with probability at least  $1-\delta$, the two inequalities hold simultaneously
\begin{align*}
    \|\M{PM}\|_F^2 \le & \>  (1 + 2(1+\Gamma_r))\|\M\Sigma_\perp\|_F^2\\
    \|(\M\Psi\t\M{Q})^\dagger \M{PM}\|_2^2 \le & \> 2\left(1 + 2\Gamma_l\right) (1 + 2(1+\Gamma_r))\|\M\Sigma_\perp\|_F^2.
\end{align*}
Plug these into~\eqref{eqn:twosided} to complete the proof.
\end{proof}

\section{Application: Tucker compression with Khatri-Rao Products}\label{sec:tucker}
In this section, we present an application of \krp{} projections to tensor compression in the Tucker format. \changes{Structured sketching for Tucker decomposition has been considered previously. For example, \cite{minster2024parallel} uses Kronecker-structured matrices, while \cite{CW19,che2025efficient,sun2020low} employ KRP-structured random matrices for randomized Tucker approximation.} 

Kressner and Peri\v{s}a \cite{kressner2017recompression} propose a randomized HOSVD algorithm for computing a Tucker approximation of tensors that are Hadamard (elementwise) products of two tensors in Tucker format.
Kressner and Peri\v{s}a present numerical experiments that show that the accuracy of recompression using random sketches based on Kronecker products of vectors is at least as accurate as feasible alternatives \cite[Section 7]{kressner2017recompression}.
They also point out that ``The analysis of this randomized algorithm is subject to further work.''
The result presented in  Theorem~\ref{thm:rhsovd_krp} fills this gap.

We discuss KRP projection–based Tucker decomposition algorithms in \Cref{ssec:tucker_algorithms}, including a new variant called RHOSVD-KRP-MEMO. We derive new results on the approximation properties of the resulting algorithms in \Cref{ssec:tucker_analysis}. We conclude the section with some numerical illustrations in \Cref{ssec:tucker_numerical}.

\subsection{Algorithms}\label{ssec:tucker_algorithms}
We can employ \changes{\krp{}} in randomized algorithms for Tucker decompositions. Consider random projection based variants of HOSVD and STHOSVD methods introduced in \cite{zhou2014decomposition,MSK20} and \cite{randtucker_survey,MSK20}, respectively. The main difference is that the sketching matrix $\M\Omega$ is taken to be a \krp{}, instead of dense Gaussian matrices. This has the advantage that generating the sketching matrices requires less work. The computational step changes from matrix multiplication in the case of Gaussians to matricized tensor times Khatri-Rao products (MTTKRP) in the case of \krp{} matrices. 

In a bit more detail, for HOSVD, consider a tensor $\T{X} \in \R^{n_1 \times \dots \times n_d}$, target rank $\V{r} = (r_1,\dots,r_d)$, and oversampling parameter $p$ with $\ell_i = r_i+p$ for all modes $1 \le i \le d$. We compute the randomized range finder of each mode unfolding $\M{X}_{(i)}$ by computing the MTTKRP $\M{Y}_{(i)} \leftarrow \M{X}_{(i)}( \M{\Omega}^{(d)} \odot \dots \odot \M{\Omega}^{(i+1)} \odot \M{\Omega}^{(i-1)} \odot \dots \odot \M{\Omega}^{(1)})$ for appropriately-sized random matrices $\{\M{\Omega}^{(i)}\}_{i=1}^d$.  The factor matrices are computed by computing the thin-QR factorization $\M{Y}_{(i)} = \M{Q}_i\M{R}_i$ for $1\le i \le d$.  Once all factor matrices $\{\M{Q}_i\}_{i=1}^d$ are computed, we obtain the core tensor via the multi-TTM operation $\T{G} = \T{X} \times_1 \M{Q}_1^\top \times_2 \M{Q}_2^\top \times \dots \times_d \M{Q}_d^\top$ at the end. \changes{It is worth mentioning that each time we form the sketch $\M{Y}_{(i)}$, we draw a fresh sequence of independent random matrices $\{\M{\Omega}^{(i)}\}_{i=1}^d$.} A variation to this is discussed shortly, which we refer to as memoization, only generates a single sequence of random matrices. While memoization has been applied in other contexts involving multiple MTTKRPs \cite[Section 3.6.2]{ballard2025tensor}, to the best of our knowledge, its use in randomized Tucker approximation has not been previously explored.  \changes{The work \cite{sun2020low} also employs a single sequence of random matrices; however, it uses this design primarily to reduce storage requirements and does not exploit memoization to reuse intermediate computations.}

\begin{algorithm}[!ht]
     \caption{Randomized HOSVD with Khatri-Rao products}
    \begin{algorithmic}[1]
    \REQUIRE Tensor $\T{X}$, target rank $\V{r} = (r_1,r_2,\dots,r_d)$, oversampling parameter $p$
    \FOR{$i=1,\dots,d$}
        \STATE Draw \changes{a new sequence of} $d-1$ random matrices $\M{\Omega}^{(j)} \in \R^{n_j \times \ell_i}$ for $j \neq i$     \STATE\label{line:rhosvd:mttkrp}Compute MTTKRP $\M{Y}_{(i)} \leftarrow \M{X}_{(i)}( \M{\Omega}^{(d)} \odot \dots \odot \M{\Omega}^{(i+1)} \odot \M{\Omega}^{(i-1)} \odot \dots \odot \M{\Omega}^{(1)})$
        \STATE\label{line:rhosvd:qr} Compute thin QR $\M{Y}_{(i)} = \M{Q}_i \M{R}$
    \ENDFOR
    \STATE\label{line:rhosvd:core} Compute core $\T{G} = \T{X} \times_1 \M{Q}_1^\top \times \cdots \times_d \M{Q}_d^\top$
    \ENSURE $\hat{\T{X}} = [\T{G}; \{\M{Q}_i\}_{i=1}^d]$    
    \end{algorithmic}
    \label{alg:rhosvd_krp}
\end{algorithm}

In the case of STHOSVD, we initialize the core tensor $\T{G}^{(0)} = \T{X}$.  At step $i$, the sketch is computed using the current core tensor using MTTKRPs, i.e., 
$$\M{Y}_{(i)} \leftarrow \M{G}_{(i)}^{(i-1)}  \left( \M{\Omega}^{(d)} \odot \dots \odot \M{\Omega}^{(i+1)} \odot \M{\Omega}^{(i-1)} \odot \dots \odot \M{\Omega}^{(1)} \right).$$ We then compute a thin QR factorization $\M{Y}_{(i)} = \M{Q}_i \M{R}_i$, and the matrix $\M{Q}_i \in \R^{n_i \times \ell_i}$ forms the orthonormal basis (factor matrix) for mode $i$. The core for the subsequent steps are updated as $\T{G}^{(i)} = \T{G} \times_i \M{Q}_i\t$. In describing STHOSVD, we have assumed that the modes are processed in order $\{1,\dots,d\}$. However, the algorithm can be readily modified to incorporate a different processing order. As in the HOSVD case, we draw a fresh sequence of independent random matrices each time we form the sketch.

The details for the HOSVD and STHOSVD algorithms employing Khatri-Rao products of random matrices as described are shown in \cref{alg:rhosvd_krp,alg:rst_krp}, respectively. \changes{Adaptive versions of these algorithms are proposed in~\cite{CW19,che2025efficient}. Our contributions here are improved analysis of the existing results.}

\begin{algorithm}[!ht]
    \begin{algorithmic}[1]
    \REQUIRE Tensor $\T{X}$, target rank $\V{r} = (r_1,r_2,\dots,r_d)$, oversampling parameter $p$
    \STATE Set $\T{G} = \T{X}$
    \FOR{$i=1,\dots,d$}
        \STATE Draw $d-1$ random matrices $\M{\Omega}^{(j)} \in \R^{\ell_j \times \ell_i}$ for $j < i$, $\M{\Omega}^{(j)} \in \R^{n_j \times \ell_i}$ for $j > i$ 
        \STATE\label{line:rst:mttkrp} Compute MTTKRP $\M{Y}_{(i)} \leftarrow \M{G}_{(i)}( \M{\Omega}^{(d)} \odot \dots \odot \M{\Omega}^{(i+1)} \odot \M{\Omega}^{(i-1)} \odot \dots \odot \M{\Omega}^{(1)})$
        \STATE\label{line:rst:qr} Compute thin QR $\M{Y}_{(i)} = \M{Q}_i \M{R}$
        \STATE\label{line:rst:ttm} Update core $\T{G} \leftarrow \T{G} \times_i \M{Q}_i^\top$
    \ENDFOR
    \ENSURE $\hat{\T{X}} = [\T{G}; \{\M{Q}_i\}_{i=1}^d]$    
    \end{algorithmic}
    \caption{Randomized STHOSVD with Khatri-Rao products}
    \label{alg:rst_krp}
\end{algorithm}

\paragraph{Computational Cost and Benefits}
To simplify the analysis of the computational complexity, we assume a $d$-mode tensor $\T{X} \in \R^{n \times \dots \times n}$ with uniform mode size and target rank $\V{r} = (r, \dots, r)$, and let $\ell = r + p$ for oversampling parameter $p$. At each mode, both \cref{alg:rhosvd_krp,alg:rst_krp} perform thin-QR factorizations of $n \times \ell$ matrices with cost $4n\ell^2$.  In \cref{alg:rst_krp}, the MTTKRP (Line~\ref{line:rst:mttkrp}) and TTM (Line~\ref{line:rst:ttm}) at each mode operate on the progressively compressed core tensor $\T{G}$, each with cost $2\ell^i n^{d - i + 1}$ flops, resulting in a total cost of
$$4dn\ell^2 + \sum_{i=1}^d 4\ell^i n^{d - i + 1}\> \text{flops}.$$

In contrast, Algorithm~\ref{alg:rhosvd_krp} computes each MTTKRP (Line~\ref{line:rhosvd:mttkrp}) on the original tensor $\T{X}$, each costing $2n^d \ell$, and performs a multi-TTM on the original tensor to form the core (Line~\ref{line:rhosvd:core}), with cost $\sum_{i=1}^d 2\ell^i n^{d-i+1}$ flops, yielding a total cost of
$$
4dn\ell^2 + 2dn^d \ell  + \sum_{i=1}^d 2\ell^i n^{d-i+1} \> \text{flops}.
$$

In Algorithm~\ref{alg:rhosvd_krp},  we generate an independent set of random matrices for each mode unfolding. In a process known as memoization, we can instead generate a single set of random matrices $\{\M{\Omega}_{i}\}_{i=1}^d$ and use the same random matrices $d$ times to compute the sketches.  We can reduce the overall cost of the computation of the sequence of multi-MTTKRPs by a factor of $d/2$ using memoization. For details on the use of memoization to compute a sequence of MTTKRPs efficiently, see~\cite[Section 3.6.2]{ballard2025tensor}. With memoization, the total cost becomes
$$
4dn\ell^2 + 4n^d \ell  + \sum_{i=1}^d 2\ell^i n^{d-i+1} \> \text{flops}.
$$

We denote Algorithm~\ref{alg:rhosvd_krp} as RHOSVD-KRP and its memoized version by RHOSVD-KRP-MEMO,  Algorithm~\ref{alg:rst_krp} as RSTHOSVD-KRP, and refer to their randomized variants that use dense Gaussian random matrices as RHOSVD and RSTHOSVD, respectively. We compare the leading-order computational costs of these algorithms, along with the deterministic HOSVD and STHOSVD, in Table~\ref{tab:cost}. For lower approximation accuracy (on the order of the square root of machine precision), the leading-order cost of HOSVD and STHOSVD can be reduced to $dn^{d+1}$ and $n^{d+1}$, respectively, by using the Gram matrix approach to compute the leading left singular vectors.

\begin{table}[!ht]
    \centering
    \begin{tabular}{l|cccc}
        \hline
        \multicolumn{5}{c}{HOSVD variants} \\ \hline
        Method & HOSVD & RHOSVD & RHOSVD-KRP & MEMO \\
        \hline
        Cost & $2dn^{d+1}$ & $2d n^d \ell $ & $2d n^d \ell $ & $6n^d\ell$ \\ 
        RNGs &  - & $\ell d n^{d-1}$ & $d(d-1)n\ell$ & $(d-1)n\ell$   \\ \hline
        \multicolumn{5}{c}{STHOSVD variants} \\ \hline
        Method & STHOSVD & RSTHOSVD & RSTHOSVD-KRP & -\\ \hline
        Cost& $2n^{d+1}$ & $4n^d\ell $ & $4n^d \ell $ &  -  \\
        RNGs &  - &  $\ell n^{d-1}$ & $\frac{d(d-1)}{2}n\ell$ & - 
    \end{tabular}
    \caption{Leading-order computational cost in flops and number of random numbers generated (RNGs) for various HOSVD and STHOSVD variants. Here, MEMO refers to RHOSVD-KRP-MEMO.} \label{tab:cost}   
\end{table}

One substantial benefit of using Khatri-Rao products of random matrices is the reduction in random entries we need to generate. 
While processing each mode in RHOSVD, drawing a dense random matrix requires $\ell n^{d-1}$ random entries, while drawing the random matrices for Algorithm~\ref{alg:rhosvd_krp} requires only $(d-1)n\ell$ random entries. In case of RSTHOVD, while processing the $i$-th mode, drawing a dense random matrix requires $\ell^{i} n^{d-i}$ entries, while drawing the random matrices for Algorithm~\ref{alg:rst_krp} requires only $(d-i)n\ell+(i-1)\ell^2$ random entries.
The performance benefits of this reduction are clearly shown in our numerical experiments in \Cref{ssec:tucker_numerical} and are summarized in \cref{tab:cost}.

\subsection{Analysis}\label{ssec:tucker_analysis}
We extend the analysis of RHOSVD and RSTHOSVD presented in \cite{MSK20} to the Khatri–Rao product based variants, \cref{alg:rhosvd_krp,alg:rst_krp}.

\begin{theorem}[RHOSVD-KRP] \label{thm:rhsovd_krp}
Let $\T[\hat]{X}$ be the approximation to $\T{X} \in \R^{n_1 \times \dots \times n_d}$ generated by \cref{alg:rhosvd_krp} or its memoized version RHOSVD-KRP-MEMO with target rank $(r_1,\dots,r_d)$. Let $0< \delta \leq \min\{1,2de^{-2}\}$. If $ \ell_i \geq   C_i^d \ln^d(2d\cdot 9^{r_i}/\delta)$ for $1 \le i \le d$, then with with probability at least $1-\delta$
    $$\| \T{X} - \T[\hat]{X} \|_F^2 \leq \sum_{i=1}^d  (1 + 2 (1+\Gamma_i))\sum_{j=r_i+1}^{n_i} \sigma_j^2 (\M{X}_{(i)}) $$ 
    where 
    \[ \Gamma_i \equiv   (C_i')^d\max\left\{\frac{\ln^{1/2}(2d/\delta)}{\ell_i^{1/2}},  \frac{\ln^d(2d/\delta)}{\ell_i} \right\}  \quad \text{ for } \quad 1 \le i \le d. \]
Here, $C_i$ and $C_i'$ are constants that only depend on $K$ for $1 \le i \le d$.
\end{theorem} 
\begin{proof}
We first give the proof for the output of \cref{alg:rhosvd_krp}. 

Let $\hat{\T{X}} = [\T{G}; \{\M{Q}_i\}_{i=1}^d]$ be the approximation of $\T{X}$ obtained by \cref{alg:rhosvd_krp}. For $i = 1$ to $d$,  the matrices $\M{Q}_i \M{Q}_i^\top$ are orthogonal projectors. Hence, from \cite[Theorem 5.1]{vann2012new}, we can express the approximation error as
\begin{equation*}
\begin{aligned}
    \| \T{X} - \hat{\T{X}} \|_F^2 &= \left\| \T{X} - \T{X} \times_1 \M{Q}_1\M{Q}_1^\top \times_2 \dots \times_d \M{Q}_d\M{Q}_d^\top \right\|_F^2 \\
    &\leq \sum_{i=1}^d \left\| \T{X} \times_i (\M{I} - \M{Q}_i \M{Q}_i^\top) \right\|_F^2 = \sum_{i=1}^d \left\| (\M{I} - \M{Q}_i \M{Q}_i^\top) \M{X}_{(i)} \right\|_F^2.
\end{aligned}
\end{equation*}
Define the events
\[
\mathcal{E}_i := \left\{ \left\| (\M{I} - \M{Q}_i \M{Q}_i^\top) \M{X}_{(i)} \right\|_F^2 \leq (1 + 2 (1+\Gamma_i)) \sum_{j = r_i + 1}^{n_i} \sigma_j^2(\M{X}_{(i)}) \right\} \qquad 1 \le i \le d.
\]
Then, by \cref{thm:rrf}, for $\ell_i \geq C_i^d \, \ln^d(2d\cdot 9^{r_i}/\delta)$,  $\prob(\mathcal{E}_i) \geq 1 - \delta/d$ and $\prob(\mathcal{E}_i^c) \leq  \delta/d$. Applying de Morgan's law and  the union bound, we get
$$
\prob\left( \bigcap_{i=1}^d \mathcal{E}_i \right) = 1 - \prob\left( \bigcup_{i=1}^d \mathcal{E}_i^c \right) \geq 1 - \sum_{i=1}^d \prob(\mathcal{E}_i^c) \geq 1 - \delta.
$$
Therefore, with probability at least $1 - \delta$, all mode-wise error bounds hold simultaneously, and we have
\[
\left\| \T{X} - \hat{\T{X}} \right\|_F^2 \leq \sum_{i=1}^d (1 + 2 (1+\Gamma_i)) \sum_{j = r_i + 1}^{n_i} \sigma_j^2(\M{X}_{(i)}).
\]
This completes the proof for the output of \cref{alg:rhosvd_krp}. The proof of the memoized version is essentially the same. In the case of  \cref{alg:rhosvd_krp}, the events $\{\mc{E}_i\}_{i=1}^d$ are independent, but they are not in the memoized version. The proof holds for the memoized version, since the union bound holds regardless of whether the events are independent.  
\end{proof}
We extend this analysis to RSTHOSVD-KRP. 
\begin{theorem}[RSTHOSVD-KRP]
Let  $\T{X} \in \R^{n_1 \times \dots \times n_d}$ be an input to \cref{alg:rst_krp} with target rank $(r_1,\dots,r_d)$  and oversampling parameter $p$ such that $\ell_i = r_i+p$ for $1 \leq i \leq d$. Let $\T[\hat]{X}$ be the approximation to $\T{X}$. If $0 < \delta \leq \min\{1,2de^{-2}\}$ and $\ell_i \geq  C_i^d \ln^d(2d\cdot 9^r/\delta)$ for $1 \le i \le d$, then with probability at least $1-\delta$ 
    $$\| \T{X} - \T[\hat]{X} \|_F^2 \leq \sum_{i=1}^d  (1 + 2 (1+\Gamma_i)) \sum_{j=r_i+1}^{n_i} \sigma_j^2 (\M{X}_{(i)}) $$ 
    where 
    \[ \Gamma_i \equiv   (C_i')^d\max\left\{\frac{\ln^{1/2}(2d/\delta)}{\ell_i^{1/2}},  \frac{\ln^d(2d/\delta)}{\ell_i} \right\}  \quad \text{ for }   \quad 1 \le i \le d. \]
Here, $C_i$ and $C_i'$ are constants that only depend on $K$.
\end{theorem}

\begin{proof}
Let $\T{G}^{(i)} = \T{X} \times_1 \M{Q}_1^\top \times_2 \dots \times_i \M{Q}_i^\top$ be the core tensor that is truncated in the first $i$ modes for $1 \le i \le d$ with $\T{G}^{(0)} = \T{X}$. Furthermore, let $\hat{\T{X}}^{(i)} = \T{G}^{(i)} \times_1 \M{Q}_1 \times_2 \dots \times_i \M{Q}_i$ be the resulting partial approximation for $1 \le i \le d$, with $\T{X}^{(0)} = \T{X}$. From \cite[Theorem 5.1]{vann2012new}, we can express the error in $\T[\hat]{X}$ as 
\begin{equation*}
    \begin{aligned}
        \| \T{X} - \hat{\T{X}} \|_F^2 &= \| \T{X} - \T{X} \times_1 \M{Q}_1\M{Q}_1^\top \times_1 \dots \times_d \M{Q}_d\M{Q}_d^\top \|_F^2 \\
        &= \sum_{i=1}^d \| \hat{\T{X}}^{(i-1)} - \hat{\T{X}}^{(i)} \|_F^2 \\
        &= \sum_{i=1}^d \| \T{G}^{(i-1)} \times_1 \M{Q}_1 \times_2 \dots \times_{i-1} \M{Q}_{\changes{i-1}} \times_i (\M{I}-\M{Q}_i\M{Q}_i^\top ) \|_F^2.
    \end{aligned}
\end{equation*}
Then, unfolding each term in the sum along mode $i$, we can express this as
\begin{equation*}
    \begin{aligned}
        \| \T{X} - \hat{\T{X}} \|_F^2 &= \sum_{i=1}^d \| (\M{I}- \M{Q}_i\M{Q}_i^\top ) \M{G}_{(i)}^{(i-1)} (\underbrace{\M{I} \kron \dots \kron \M{I}}_{d-i} \kron \M{Q}_{i-1} \kron \dots \kron \M{Q}_1) \|_F^2 \\
       &\leq \sum_{i=1}^d \| (\M{I}- \M{Q}_i\M{Q}_i^\top ) \M{G}_{(i)}^{(i-1)} \|_F^2,
    \end{aligned}
\end{equation*}
where the inequality comes as each $ \M{Q}_i$ has orthonormal columns, so the Kronecker product 
$ \M{I} \kron \dots \kron \M{I} \kron \M{Q}_{i-1} \kron \dots \kron \M{Q}_1 $
 does as well. 
 
 Define the events for $1 \le t \le d$, 
\[ \mc{E}_t = \left\{ \sum_{i=1}^t\| (\M{I}- \M{Q}_i\M{Q}_i^\top ) \M{G}_{(i)}^{(i-1)} \|_F^2 \le \sum_{i=1}^t(1 + 2 (1+\Gamma_i)) \sum_{j=r_i+1}^{n_i} \sigma_j^2 (\M{X}_{(i)})\right\}. \]
We know by \cref{thm:rrf}, $\prob\{ \mc{E}_1 \} \ge 1 -\delta/d$ and note that $ \mc{E}_1 \subseteq \dots \subseteq \mc{E}_t \subseteq \mc{E}_{t+1} \subseteq \dots \subseteq \mc{E}_d$. Conditional on the event $\mc{E}_t$, we can write for $1 \leq t \le d-1$
\[  
\begin{aligned}
\| (\M{I}- \M{Q}_{t+1}\M{Q}_{t+1}^\top ) \M{G}_{(t+1)}^{(t)} \|_F^2  \le& \>  (1 + 2 (1+\Gamma_{t+1})) \sum_{j=r_{t+1}+1}^{n_t} \sigma_j^2 (\M{G}^{(t)}_{(t+1)}),  \\ 
 \le & \>  (1 + 2 (1+\Gamma_{t+1})) \sum_{j=r_{t+1}+1}^{n_t} \sigma_j^2 (\M{X}_{(t+1)}).
\end{aligned}
\]
with probability at least $1-\delta/d$ due to \cref{thm:rrf}. In the last step, we have used the fact that the singular values of each partially truncated core tensor cannot be larger than the singular values of the original tensor $\T{X}$ as $\M{Q}_t$ has orthonormal columns for every $1 \le t\le d$ (see~\changes{\cite[proof of Theorem 3.2]{MSK20}}). Thus, for $1\leq t \leq  d-1$, we have $\prob\{ \mc{E}_{t+1} | \mc{E}_t \} \geq 1 - \delta/d$. 
From the law of total probability, we have
\[
\begin{aligned}
\prob\{\mc{E}_d\} &= \prob\{\mc{E}_d | \mc{E}_{d-1}\} \prob\{\mc{E}_{d-1}\} + \prob\{\mc{E}_d | \mc{E}_{d-1}^c\} \prob\{\mc{E}_{d-1}^c\} \\
&\geq \prob\{\mc{E}_d | \mc{E}_{d-1}\} \prob\{\mc{E}_{d-1}\} \\
& \geq \left(1 - \delta/d \right)\prob\{\mc{E}_{d-1} | \mc{E}_{d-2}\}  \prob\{\mc{E}_{d-2}\} \\
& \geq \left(1 - \delta/d \right)^2  \prob\{\mc{E}_{d-2}\}.
\end{aligned}
\]
Solving the above iteratively by conditioning on the previous event, we get
$$
\prob\{\mc{E}_d\} \geq  \left(1 - \delta/d \right)^d \geq 1 - d \delta/d = 1 -\delta.
$$
This completes the proof.
\end{proof}
 
The analysis of RHOSVD-KRP and RSTHOSVD-KRP is also provided in~\cite{CW19,che2025efficient}. 
Since the results in~\cite{che2025efficient} are more recent, we focus on comparing our results against theirs. 
The results in~\cite[Theorem 5.14]{che2025efficient} for \krp{}s of Gaussians, however, show explicit dependence on the mode sizes $N = \prod_{i=1}^dn_i$.  
The origin of the explicit dependence on $N$ is likely due to a crude bound in Lemma 5.11 of~\cite{che2025efficient}. 

\subsection{Numerical Illustrations}\label{ssec:tucker_numerical}
We illustrate the accuracy and performance of \cref{alg:rhosvd_krp,alg:rst_krp} on synthetic tensors.

\paragraph{Setup} We consider a synthetic tensor, which we call the Cauchy tensor, with entries defined as
\[ x_{i_1,i_2,i_3,i_4} = \frac{1}{(i_1^{\alpha} + \dots + i_4^{\alpha})^{1/\alpha}}, \qquad   1 \le i_j \le n, 1 \le j \le 4.  \]
The value of $\alpha$ controls the sharpness of the singular value decay; smaller $\alpha$ values result in sharper decay in the mode-wise singular values of $\T{X}$.

We verify the accuracy of RHOSVD-KRP (\cref{alg:rhosvd_krp}), its memoized version RHOSVD-KRP-MEMO, and RSTHOSVD-KRP (\cref{alg:rst_krp}) by comparing the relative error of these algorithms to that obtained by HOSVD and RHOSVD. We plot the relative error with increasing target rank for all the algorithms in  \cref{fig:cauchyp} for $\alpha=2$. We use the same target rank in each mode, and use no oversampling so that each algorithm computes an approximation of the same rank. 
From \cref{fig:cauchyp}, we can see that RHOSVD-KRP and RHOSVD-KRP-MEMO perform similarly to RHOSVD, and the performance of these three is comparable to HOSVD. 
A similar trend is observed when comparing RSTHOSVD-KRP with RSTHOSVD and STHOSVD. We observed a similar comparison among the proposed and baseline algorithms for Cauchy tensors generated using different values of $\alpha$.

\begin{figure}[!ht]
    \centering
    \includegraphics[width = 0.6\textwidth]{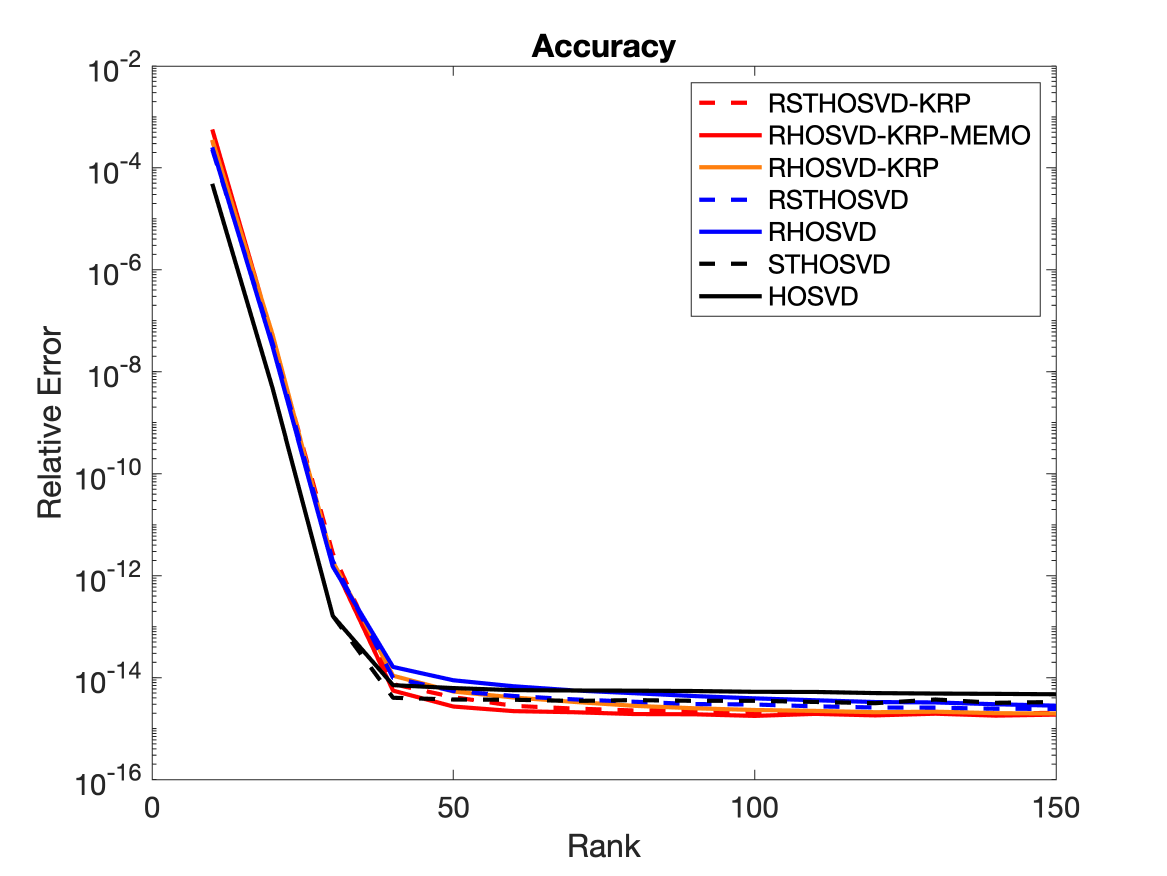}
    \caption{Plots of relative error versus target ranks for a 4-way Cauchy tensor with $n=250$ and exponents $\alpha = 2$.}
    \label{fig:cauchyp}
\end{figure}

For $\alpha=2$,  we also compare the runtime of RHOSVD-KRP, its memoized variant RHOSVD-KRP-MEMO, and RSTHOSVD-KRP with baseline algorithms. In \cref{fig:syn4d}, we present the average total runtime (left panel) of all algorithms as the target rank $r$ increases, along with a detailed time breakdown for the randomized methods (right panel). 

If the target rank is low, both RSTHOSVD-KRP and RSTHOSVD have similar runtimes and are substantially faster than STHOSVD. However, as the target rank increases, RSTHOSVD-KRP becomes significantly faster than RSTHOSVD.
The plot showing the breakdown reveals that this performance gap is primarily due to the high cost of generating the dense Gaussian random matrix in RSTHOSVD. In contrast, RSTHOSVD-KRP avoids this cost, spending negligible time on random number generation. 

\begin{figure}[!ht]
    \centering
    \includegraphics[width=0.98\textwidth]{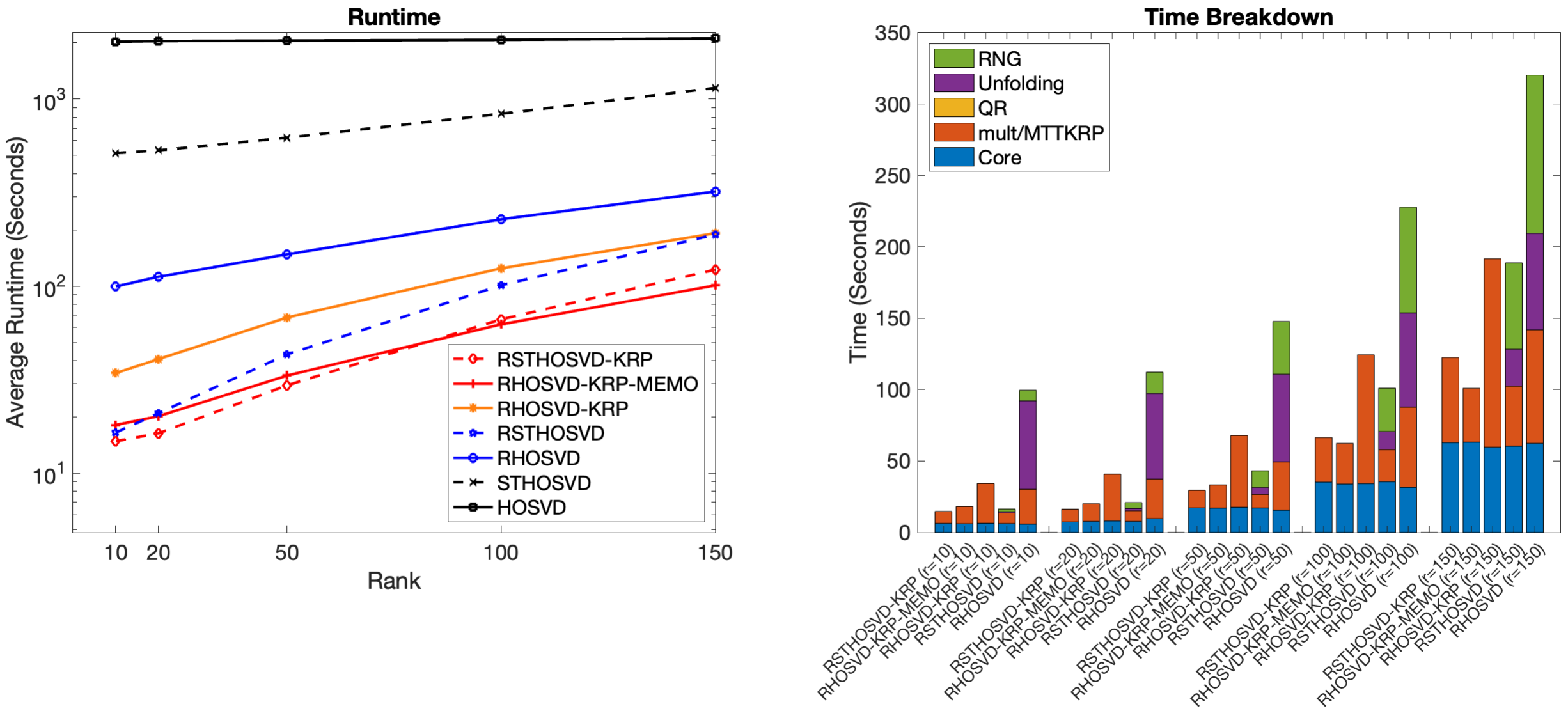}
    \caption{Running time in seconds of the different methods (left panel) on the Cauchy-like tensor averaged over three runs and a breakdown of the running times in different components (right panel). RNG refers to the time of generating pseudorandom numbers, `Unfolding' refers to the time required for matricization, `QR' refers to the time for orthogonalizing the sketches, `mult/KRP' refers to the time for matrix multiplication in the case of Gaussians and MTTKRP for \krp matrices, and `Core' refers to the time for forming the core tensor. }
    \label{fig:syn4d}
\end{figure}

 Now we discuss the HOSVD variants. RHOSVD-KRP consistently outperforms RHOSVD across all values of the target rank. For smaller target ranks, the tensor unfolding time in RHOSVD dominates the overall runtime, making it slower than RHOSVD-KRP, as shown in the breakdown plots. For higher target ranks, the performance gap between these two methods is primarily due to the increasing cost of generating the dense Gaussian matrix in RHOSVD, an overhead that is negligible in RHOSVD-KRP. The memoized variant, RHOSVD-KRP-MEMO, further accelerates RHOSVD-KRP by reducing the MTTKRPs computation time,  making it the fastest method, even outperforming RSTHOSVD-KRP at higher rank values.

\section{Application: Sensor Placement}\label{sec:flow}
In fluid dynamics, there is a pressing need to study high-resolution flow fields using experimental data. 
However, measurements of the flow fields are collected at only coarse grid point locations, thereby limiting the amount of information that can be extracted from these measurements. 
It is, therefore, crucially important to determine optimal locations for the placement of sensors to extract as much information from these limited measurements as possible. 

In this application, we are given a set of snapshots that represent the flow fields under different flow conditions that are split into a training set and a testing set. Let $d$ be the spatial dimension, so that the snapshot tensor can be expressed as $\T{S} \in \R^{N_1\times \dots \times N_d \times T}$, where $N_i$ for $1 \leq i \leq d$ represent the number of grid points in each spatial dimension and $T$ represents the number of training snapshots. The goal is to identify the locations (or indices) of the grid at which to collect the data and then recover the flow field given the measurements from a new flow field. 
We follow the tensor-based approach in~\cite{farazmand2023tensor}, and we refer the reader to the references within that paper for alternative approaches. 
There are two phases in this algorithm: training and test phases. 

\paragraph{Training phase} We first compute a Tucker low-rank approximation to the training tensor $\T{S}$ as 
\[ \T{S} \approx \T{G} \times_1 \M{Q}_1\M{Q}_1\t \times_2\dots \times_d \M{Q}_d\M{Q}_d\t,\]
where $\M{Q}_i\in \R^{N_i\times \ell_i}$ for $1\leq i \leq d$ have orthonormal columns. 
This can be computed using any of the algorithms described in \cref{sec:tucker}.  
Next, for each $1 \leq i \leq d$ we compute a column-pivoted QR factorization 
\[ \M{Q}_i\t \bmat{\M\Pi_1^{(i)} &\M\Pi_2^{(i)} }=  \M{Z}_1^{(i)} \bmat{\M{R}_{11}^{(i)} & \M{R}_{12}^{(i)}}. \]
where $\M{Z}_1^{(i)} \in \R^{\ell_i \times \ell_i}$ is orthogonal and $\M{R}_{11}^{(i)} \in \R^{\ell_i\times \ell_i}$ is nonsingular and upper triangular. The matrix $\bmat{\M\Pi_1^{(i)} &\M\Pi_2^{(i)} }$ is a permutation matrix. We extract the matrix $\M{P}_i \equiv \M\Pi_1^{(i)} \in \R^{N_i\times \ell_i}$ which contains columns from the identity matrix; next, we define the factor matrices $\M{A}_i \equiv \M{Q}_i (\M{P}_i\t\M{Q}_i)^{-1}$ for $1\leq i \leq d$. The matrix \changes{$\M{P}_i$} determines the index set $\mathcal{I}_i$ with cardinality $|\mathcal{I}_i| = \ell_i$. The optimal locations (in terms of indices) correspond to the Cartesian grid $\mc{I} = \mc{I}_1 \times \dots \times \mc{I}_d$ totaling $\prod_{i=1}^d \ell_i$ sensor locations. 

\paragraph{Test phase} Next, we show how to recover a flow field corresponding to a new tensor $\T{T} \in \R^{N_1\times \dots \times N_d}$. We measure/access the tensor only at a select few locations $\widehat{\T{T}}\equiv \T{T}(\mc{I}_1,\dots \mc{I}_d) = \T{T}\bigtimes_{i=1}^d\M{P}_i\t \in \R^{\ell_1 \times \dots \times \ell_d} $. An approximation to the tensor $\T{T}$ can be obtained as 
\[ \T{T} \approx \widehat{\T{T}} \times_1 \M{A}_1 \times_2 \dots \times_d \M{A}_d. \]

As discussed in~\cite[Section 2.3.2]{farazmand2023tensor}, the most expensive step is the computation of the factor matrices $\M{Q}_i$ for $1\leq i \leq d$, and the authors suggest the use of RHOSVD. 
We accelerate this step further using \cref{alg:rhosvd_krp}.  We demonstrate the performance of our approach on a flow data set with three spatial dimensions obtained from~\cite{Popinet04:Tangaroa}. The details of the dataset and the processing is given in~\cite[Section 3.3]{farazmand2023tensor}. Briefly, the number of grid points is $150\times 90 \times 60$ with $150$ snapshots in the training set and $61$ in the test set. We consider two experiments: first, we report the timings for the compression of the training tensor in the training phase; second, we report the accuracy of the testing phase. 
\begin{figure}[!ht]
    \centering    
    \includegraphics[width=0.9\textwidth]{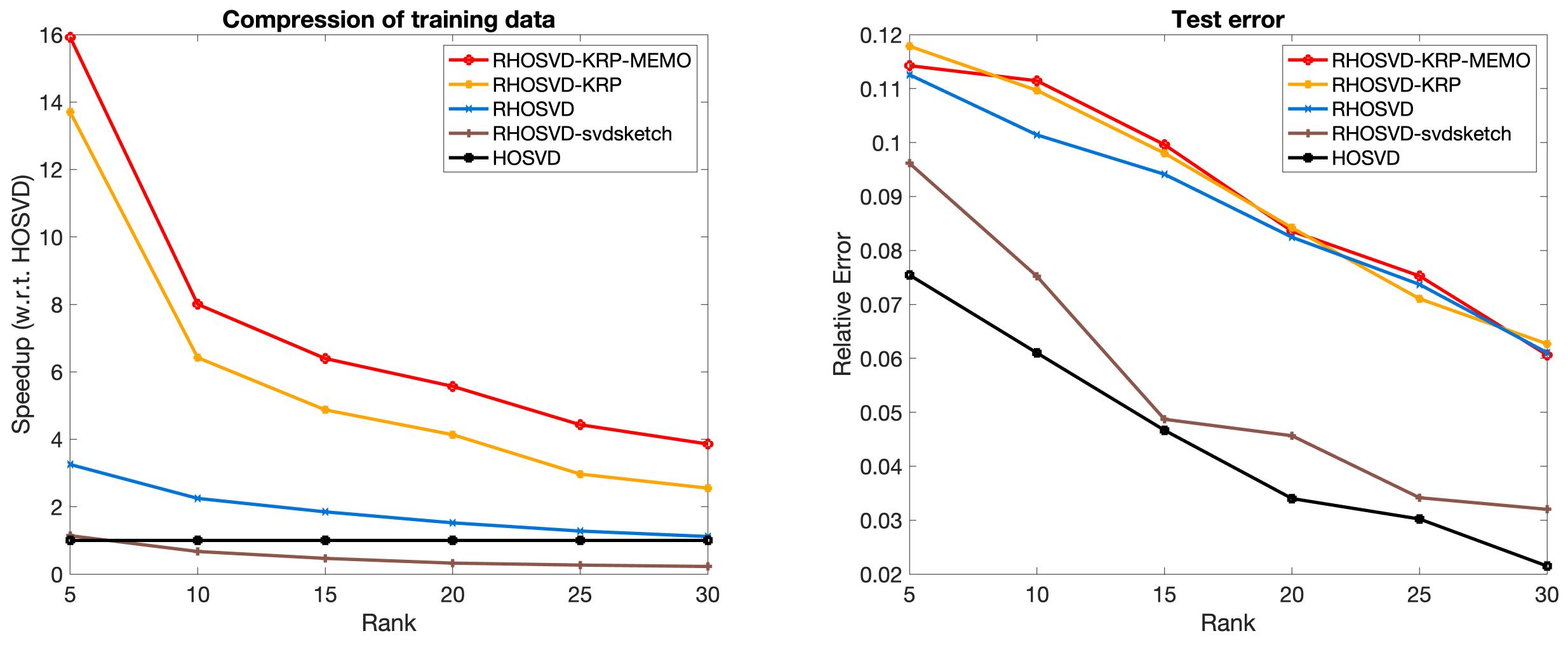}
    \caption{Left: Average speedup over $10$ runs for different rank values. Right: Average relative error over 10 runs for different rank values.}

    \label{fig:tang}
\end{figure}
\paragraph{Experiment 1: Compression of the training tensor} In this experiment, we compare the speedup of the following four methods w.r.t. HOSVD: RHOSVD-KRP (Algorithm~\ref{alg:rhosvd_krp}), its memoized version RHOSVD-KRP-MEMO, randomized HOSVD (RHOSVD), which uses Gaussian random matrices, and a variant of RHOSVD used in \cite{farazmand2023tensor}, which utilizes the MATLAB function \texttt{svdsketch} to compute the factor matrices. We refer to the latter implementation as RHOSVD-svdsketch. For HOSVD, we use the Gram approach to compute the factor matrices. We consider compression ranks of the form $(r,r,r,150)$, for  $r=5,10,15,20,25,30$; note that we do not compress in the last mode. The left panel of Figure~\ref{fig:tang} summarizes the average speedup over $10$ runs.

As we see from the plot, RHOSVD-KRP-MEMO is the fastest method, achieving a $16\times$ speedup compared to HOSVD at $r=5$. RHOSVD-KRP is the second fastest, with approx. $14\times$ speedup over HOSVD at the same rank. Among all methods, RHOSVD-svdsketch is the slowest one, even slower than the deterministic HOSVD. This is because, compared to random projection followed by the SVD approach used in RHOSVD, the \textit{svdsketch} function performs additional steps such as power iterations, error estimation, and adaptive rank selection, which improve its accuracy and robustness but introduce greater computational overhead.

\paragraph{Experiment 2: Accuracy of testing phase} We use a sensor grid of $r\times r \times r$ (similar to~\cite{farazmand2023tensor}) for $r=5,10,15,20,25,30$. We compute the flow field for each of the $61$ test flow fields and report the error averaged over all the snapshots. The right panel of \cref{fig:tang} summarizes the trade-off between relative error and the number of sensors for the test data for different HOSVD variants. The figure shows that the relative errors of RHOSVD-KRP-MEMO and RHOSVD-KRP are nearly identical and comparable to that of RHOSVD. The relative errors of HOSVD and RHOSVD-svdsketch are lower than those of RHOSVD-KRP-MEMO, RHOSVD-KRP, and RHOSVD, which is expected because HOSVD is a deterministic method and RHOSVD-svdsketch utilizes an additional power iteration step to improve accuracy. However, these methods are significantly slower than RHOSVD-KRP-MEMO and RHOSVD-KRP, making RHOSVD-KRP-MEMO and RHOSVD-KRP alternative choices to gain computational efficiency.

\section{Conclusions} \label{sec:conclusion}
This paper provides new analysis for \krp{}s and new algorithms for compressing block-structured matrices and tensors in the Tucker format. Compared to Gaussian random matrices, far fewer pseudo random numbers need to be generated, which can substantially reduce the run time. \changes{In our experiments, for low dimension $d$, the accuracy of the randomized algorithms with \krp{}s is comparable to Gaussian random matrices. } The analysis shows an exponential dependence on the dimension $d$, which appears to be extremely pessimistic, suggesting a gap still exists between theory and practice. A bevy of applications in matrix and tensor compressions shows the computational benefits of the \krp{}s.

\section*{Acknowledgements}
AKS is grateful to Elizaveta Rebrova for pointing us to \cite{haselby2023fast} and for helpful conversations. We also thank the reviewers for helpful comments and for pointing out an issue with an earlier draft. 

\appendix

\bibliographystyle{abbrv}
\bibliography{refs}
\end{document}